\newcommand*\abs[1]{\lvert#1\rvert}
\numberwithin{equation}{section}
\newtheorem{lem}{Lemma}[section]
\newtheorem{thm}{Theorem}[section]
\newtheorem{rem}{Remark}[section]
\begin{document}
\markboth{Huimin Huang, Wensheng Zhang}{Convergence Rates For Tikhonov Regularization of Coefficient Identification Problems in Robin-Boundary Equation}


\begin{center}{\bf Convergence Rates For Tikhonov Regularization of Coefficient Identification Problems in Robin-Boundary Equation}
\end{center}

\begin{center}
\centerline{\large Huimin Huang$^{1,2}$, Wensheng Zhang$^{1,2}$\footnote{Corresponding Author.~E-mail address:~zws@lsec.cc.ac.cn(W.S. Zhang)}}
\vspace{0.25cm}
{\small {\em
$^1$LSEC, ICMSEC, Academy of Mathematics and Systems Science, Chinese Academy of Sciences, Beijing 100190, China\\
$^2$School of Mathematical Sciences, University of Chinese Academy of Sciences, Beijing 100049, China}}
\end{center}
\begin{abstract}
This paper investigates the convergence rate for Tikhonov regularization of the problem of identifying the coefficient $a \in L^{\infty}(\Omega)$ in the Robin-boundary equation $-\mathrm{div}(a\nabla u)-bu=f,~ x \in \Omega \subset \mathbb R^M,~ M \geq 1$ and $u=0,~ x ~on~ \partial\Omega$, where $f(x)\in L^{\infty}(\Omega)$. Assume we only know the imprecise values of $u$ in the subset $\Omega_1 \subset \Omega$ given by $z^{\delta} \in {H}^1(\Omega_1)$, satisfies $\|u-z^{\delta}\|_{H^1(\Omega_1)}\leq \delta$. We assume $u$ satisfy the following boundary conditions on $\partial\Omega_1$:
\begin{align*}
\nabla u \cdot \vec{n}+\gamma u &=0~on~\partial\Omega_1,
\end{align*}
where $\vec{n}$ is the normal vector of $\partial\Omega_1$ and $\gamma>0$ is a constant. We regularize this problem by correspondingly minimizing the strictly convex functional:
  \begin{align*}
  \min \limits_{a \in \mathbb A} &\frac12 \int_{\Omega_1} a \abs {\nabla(U(a)-z^\delta)}^2
+\frac12\int_{\partial\Omega_1} a\gamma [U(a)-z^\delta]^2-\frac12 \int_{\Omega_1} b [U(a)-z^\delta]^2\\
&+ \rho \| a-a^* \|^2_{L^2(\Omega)},
  \end{align*}
  where $U(a)$ is a map for $a$ to the solution of the Robin-boundary problem, $\rho > 0$ is the regularization parameter and $a^*$ is a priori estimate of $a$. We prove that the functional attain a unique global minimizer on the admissible set. Further, we give very simple source condition without the smallness requirement on the source function which provide the convergence rate $O(\sqrt{\delta})$ for the regularized solution.
\end{abstract}
\noindent\emph{Keywords:}~Tikhonov regularization, ~Convex functional, ~Convergence rate, ~Stability.


\section{Introduction}

Elliptic partial differential equation is used to describe the physical balance of steady state equation (see Ref.\cite{CFM01}), the partial differential equation of the problem is the definite condition of known equations to solve the problem of the solution, the inverse problem is to solve the problem by some known information of some unknown variables, since most of the inverse problem is ill-posed, solve the inverse problem of regularization method has become the main information (see Ref.\cite{CFM03},  ~\cite{CFM04}, ~\cite{CFM05}, ~\cite{CFM06}), Although there are many scholars devoted to the study of regularization methods (see Ref.\cite{CFM07},  ~\cite{CFM08}, ~\cite{CFM09}, ~\cite{CFM10},  ~\cite{CFM11}, ~\cite{CFM12}, ~\cite{CFM13},  ~\cite{CFM14}, ~\cite{CFM15}) and its references. However, the convergence rate of the regular solution is rarely studied (see Ref.\cite{CFM16},  ~\cite{CFM17}, ~\cite{CFM18}, ~\cite{CFM19},  ~\cite{CFM20}).

As far as we know, there are only papers by Engl and Kunisch (see Ref.\cite{CFM17}) devoted to the convergence speed of Tikhonov regularization for the above problem. In 1989, he used the output least squares method and Tikhonov regularization method to solve the nonlinear ill-posed problem, and obtained the convergence rate under certain source conditions. However, they face the difficulty of finding the global minimum when dealing with non-convex functionals. In addition, their source conditions are difficult to check and require high regularity in the coefficients sought.

In 2010, Dinh and Tran applied Tikhonov regularization to new convex energy functionals to solve the inverse problem and obtained the convergence rate of the method instead of using output least squares methods (see Ref.\cite{CFM19}). Their source conditions are easy to check and much weaker than those of Engl and Kunisch, because they remove the so-called condition that the source function is sufficiently small, which is generalized in nonlinear ill-posed regularization theory, they studied the elliptic equation
\begin{subequations}
\begin{align*}
-\mathrm{div}(a\nabla u) &=f~in ~ \Omega ,\\
u&=0 ~on~ \partial\Omega,
\end{align*}
\end{subequations}
proved the convergence rate of this equation is $\|a^{\delta}_{\rho}-a^+ \|_{L^2(\Omega)}=O(\sqrt{\delta})$.

In 2021, Wang and He (see Ref.\cite{CFM20}) studied the Dirichlet equation
 \begin{subequations}
\begin{align*}
-\mathrm{div}(a\nabla u)+cu &=f~in ~ \Omega ,\\
u&=0 ~on~ \partial\Omega,
\end{align*}
\end{subequations}
proved that the convergence rate of this equation is $\|a^{\delta}_{\rho}-a^+ \|_{L^2(\Omega)}=O(\sqrt{\delta})$.

Inspired by them, we studied the problem of identifying the coefficient $a$ in the Robin-boundary equation
\begin{subequations}\label{aa}
\begin{align}
-\mathrm{div}(a\nabla u)-bu&=f~in ~ \Omega ,\\
u&=0 ~on~ \partial\Omega,
\end{align}
\end{subequations}
where $a$ is the unknown coefficient satisfy $0<\underline{a}<a(x)<\overline{a}$, $b$ is a known coefficient satisfy $0<b(x)<\overline{b}$, and the observed value of $u$ named $z^\delta$ in $\Omega_1 \subset \Omega$ is used to approximate $u$. In this paper, we assume that $z^\delta$ and $u$ satisfy the following conditions on $\partial\Omega_1$:
\begin{align}\label{aam}
\nabla u \cdot \vec{n}+\gamma u =0 ~on~\partial\Omega_1,
\end{align}
where $\vec{n}$ is the normal vector of $\partial\Omega_1$ and $\gamma>0$ are given.

To overcome the shortcomings of the output least-squares method, we use the Tikhonov regularization which is applied to the new convex energy functional $G_{z^\delta}(a)$ and the convergence rate of its solution is calculated. The setting of convex energy functional is a difficult point to deal with this problem. It is necessary to ensure the convexity of the functional and easy to solve the convergence rate. Through multiple test, the functional:
\begin{align}\label{ab}
a\rightarrow G_{z^\delta}(a)= &\frac12 \int_{\Omega_1} a \abs {\nabla[U(a)-z^\delta]}^2
+\frac12\int_{\partial\Omega_1} a\gamma [U(a)-z^\delta]^2\nonumber\\
&-\frac12 \int_{\Omega_1} b [U(a)-z^\delta]^2,~~~~a \in \mathbb{A}
\end{align}
 is constructed for solving (\ref{aa}). Here, $U(a)$ is the coefficient-to-solution maps for (\ref{aa}) of $a$ to $U(a)$ with $\mathbb{A}$ being the admissible set, $z^\delta$ is the observed value of $u$ satisfies $\|u-z^{\delta}\|_{H^1(\Omega_1)}\leq \delta$, respectively. Then, we apply Tikhonov regularization to this functional and establish the convergence rate for this approach. Namely, we consider the minimization problem
\begin{align}\label{ad}
\min \limits_{a \in \mathbb A} &\frac12 \int_{\Omega_1} a \abs {\nabla[U(a)-z^\delta]}^2
+\frac12\int_{\partial\Omega_1} a\gamma [U(a)-z^\delta]^2-\frac12 \int_{\Omega_1} b [U(a)-z^\delta]^2\nonumber\\
&+ \rho \| a-a^* \|^2_{L^2(\Omega)},
\end{align}
 for identifying $a$ in (\ref{aa}), where $\rho > 0$ is the regularization parameter, and $a^*$ is a priori estimate of $a$.

Throughout this paper we assume that $\Omega_1$ is an open bounded connected domain in $\mathbb{R}^M,~~M>1$, with boundary $\partial\Omega_1$ and $f \in L^{\infty}(\Omega)$ are given. We use the standard notion of Sobolev
spaces $H^1(\Omega)$, $H^1_0(\Omega)$ and $W^{1,p}(\Omega)$ (see Ref.\cite{CFM01}). Moreover, in order to be no ambiguity, we write $\int _{\partial\Omega_1} \cdots$, $\int _{\Omega_1} \cdots$ and $\int _{\Omega} \cdots$instead of $\int _{\partial\Omega_1} \cdots \mathrm{d} s$, $\int _{\Omega_1} \cdots \mathrm{d} x$ and $\int _{\Omega} \cdots \mathrm{d} x$, respectively.

This paper is organized as follows. In section 2, we mainly prove the strictly convex functional $G_{z^\delta}(a)$ so that the minimization problem has a unique solution in the admissible set $\mathbb{A}$. We propose a relatively simple source condition in form, further prove the convergence and stability of the optimal solution. In section 3, the discussion on our convergence order is given. The discussion on our source condition is given in section 4.
\section{Convergence And Stability For Tikhonov Regularization Of The Diffusion Coefficient Identification Problems}
\subsection{Problem Setting}
We consider the problem of determining the coefficient $a=a(\cdot)\in L^\infty (\Omega)$ in the partial differential equation (\ref{aa}), assuming that $u$ is given in $\Omega_1$. To accurately describe the problem, we suppose that the function $u \in H^1(\Omega_1)$ is called a weak solution of (\ref{aa})-(\ref{aam}) if the function $u$ has following integral form:
\begin{align*}
\int_{\Omega_1} \left(-\mathrm{div}(a\nabla u)-bu\right)v
=\int_{\Omega_1}a\nabla u\nabla v-\int_{\partial\Omega_1}a\nabla u\vec{n}v-\int_{\Omega_1}buv
=\int_{\Omega_1} fv,
\end{align*}
for all $v \in H^1(\Omega_1)$.
That is
\begin{align}\label{cct}
\int_{\Omega_1}a\nabla u\nabla v+\int_{\partial\Omega_1}a\gamma uv-\int_{\Omega_1}buv
=\int_{\Omega_1} fv
\end{align}
if the coefficient $a$ belongs to the admissible set
\begin{align}\label{ct}
\mathbb A=\{ a(x) \in L^\infty (\Omega) \mid 0<\underline{a}<a(x)<\overline{a},~~\mathrm{a.e. ~on~} \Omega\}
\end{align}
where $\underline{a}$ and $\overline{a}$ are known constants. $b(x)$ belongs to the admissible set
\begin{align*}
\mathbb B=\{ b(x) \in L^\infty (\Omega)\mid 0 <b(x) <\bar b,~~\mathrm{a.e. ~on~} \Omega\}
\end{align*}
where $\bar b$ is a known constant, and $0< \overline{b} <\min\left\{\frac{\underline{a}}{C_P}, ~\frac{\underline{a}\widetilde{\gamma}}{C_F}\right\}$. Here $\widetilde{\gamma} =\min \{1,\gamma\}$ and $C_P$ and $C_F$ are known constants, depending only on $\Omega_1$, that appeared in the Poincar$\acute{e}$ inequality:
\begin{align}\label{at}
\int_{\Omega} v^2 \leq C_P\int_{\Omega} \abs {\nabla v}^2,~~ \forall ~v \in H_0^1(\Omega),
\end{align}
and $C_F$ appeared in the Friedrichs inequality:
\begin{align}\label{abb}
\parallel v \parallel^2_{H^1(\Omega_1)}\leq C_F\left[
\int_{\Omega} \abs{\nabla v }^2
+\left( \int_{\partial\Omega} v\right)^2\right],~~ \forall ~v \in H^1(\Omega),
\end{align}

So as to obtain the boundedness of $u$ in the $H^1(\Omega_1)$ norm,
 Then we have the following Lemma.
\begin{lem}\label{ta}
There is a unique weak solution in $H^1(\Omega_1)$ of (\ref{aa})-(\ref{aam}) which satisfies the inequality
\begin{align}\label{ba}
\| u \|_{H^1(\Omega_1)} \leq \frac 1 \alpha \| f \|_{L^2(\Omega)},
\end{align}
here
\begin{align}\label{aai}
\alpha=\frac{\underline{a}\widetilde{\gamma}-\overline{b}C_F}{C_F}>0.
\end{align}
where $C$ is a positive constant.
\end{lem}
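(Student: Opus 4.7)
The plan is to apply the Lax--Milgram theorem to the bilinear form
\[
B(u,v) = \int_{\Omega_1} a\nabla u\nabla v + \int_{\partial\Omega_1} a\gamma uv - \int_{\Omega_1} buv
\]
and the linear functional $L(v) = \int_{\Omega_1} fv$ on $H^1(\Omega_1)$. Existence and uniqueness will follow at once, and the a priori bound \eqref{ba} will drop out by testing with $v=u$.

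First, I would verify the continuity of $B$ and $L$. Continuity of $L$ is immediate by Cauchy--Schwarz and the embedding $H^1(\Omega_1)\hookrightarrow L^2(\Omega_1)$. For $B$, the volume terms are controlled by $\overline{a}$ and $\overline{b}$ together with Cauchy--Schwarz, while the boundary term is handled by the trace theorem $\|v\|_{L^2(\partial\Omega_1)}\le C_{tr}\|v\|_{H^1(\Omega_1)}$, yielding $|B(u,v)|\le C\,\|u\|_{H^1(\Omega_1)}\|v\|_{H^1(\Omega_1)}$.

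The heart of the proof is coercivity, and this is the step I expect to be most delicate. Using the lower bound $a(x)\ge\underline{a}$ and the definition $\widetilde{\gamma}=\min\{1,\gamma\}$, I would estimate
\[
B(v,v) \ge \underline{a}\,\widetilde{\gamma}\left[\int_{\Omega_1}|\nabla v|^2 + \int_{\partial\Omega_1}v^2\right] - \overline{b}\int_{\Omega_1} v^2.
\]
Applying the Friedrichs inequality \eqref{abb} to bound the bracket from below by $C_F^{-1}\|v\|_{H^1(\Omega_1)}^2$ and $\int_{\Omega_1}v^2\le\|v\|_{H^1(\Omega_1)}^2$ to the defect term, one gets
\[
B(v,v) \ge \left(\tfrac{\underline{a}\widetilde{\gamma}}{C_F} - \overline{b}\right)\|v\|_{H^1(\Omega_1)}^2 = \alpha\,\|v\|_{H^1(\Omega_1)}^2,
\]
with $\alpha>0$ guaranteed by the standing assumption $\overline{b}<\underline{a}\widetilde{\gamma}/C_F$. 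The tricky part is arranging the boundary term to combine with $\int_{\Omega_1}|\nabla v|^2$ in exactly the form required by \eqref{abb}; the factor $\widetilde{\gamma}$ is precisely what makes both terms scalable by the same constant.

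Once coercivity is in hand, the Lax--Milgram theorem produces a unique $u\in H^1(\Omega_1)$ satisfying \eqref{cct}. Taking $v=u$ and combining $B(u,u)\ge\alpha\|u\|_{H^1(\Omega_1)}^2$ with $L(u)\le\|f\|_{L^2(\Omega)}\|u\|_{H^1(\Omega_1)}$ gives \eqref{ba} after dividing by $\|u\|_{H^1(\Omega_1)}$ (trivial if it vanishes). No additional regularity theory is required.
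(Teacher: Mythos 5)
Your proposal is correct and follows essentially the same route as the paper: the a priori bound comes from testing the weak formulation with $v=u$, bounding the left-hand side below by $\alpha\|u\|_{H^1(\Omega_1)}^2$ via $a\ge\underline{a}$, $\widetilde{\gamma}=\min\{1,\gamma\}$ and the Friedrichs inequality \eqref{abb}, and the right-hand side above by Cauchy--Schwarz. The only difference is that you make the existence/uniqueness part explicit through Lax--Milgram (continuity plus the same coercivity estimate), whereas the paper's proof records only the energy estimate and leaves well-posedness implicit; this is a completion rather than a different argument.
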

\begin{proof}
Multiplying both sides of (\ref{aa}a) by $u$ and integrating by parts, we get
\begin{align*}
\int_{\Omega_1}a\abs{\nabla u}^2+\int_{\partial\Omega}a\gamma u^2-\int_{\Omega_1}bu^2
=\int_{\Omega_1} fu,
\end{align*}
for the left side of the equation we have
\begin{align}\label{aaz}
\int_{\Omega_1}a\abs{\nabla u}^2+\int_{\partial\Omega}a\gamma u^2-\int_{\Omega_1}bu^2
\geq \frac{\underline{a}\widetilde{\gamma}}{C_F}\| u \|^2 _{H^1(\Omega_1)}-\overline{b}\| u \|^2 _{H^1(\Omega_1)}.
\end{align}
For the right side of the equation, we use H\"{o}lder inequality,
\begin{align*}
&\int_{\Omega_1} fu
\leq\| f \|_{L^2(\Omega_1)}\| u \|_{L^2(\Omega_1)}
\leq\| f \|_{L^2(\Omega_1)}\| u \|_{H^1(\Omega_1)}.
\end{align*}

Because $f$ has a continuous boundary on $\partial\Omega_1$, that is $f$ is bounded in $\Omega_1$, then
\begin{align*}
\frac{\underline{a}\widetilde{\gamma}-\overline{b}C_F}{C_F}\| u \|_{H^1(\Omega_1)}
\leq\| f \|_{L^2(\Omega_1)},
\end{align*}

with (\ref{aai}) we get
\begin{align*}
\|  u \|_{H^1(\Omega_1)}
\leq \frac{1}{\alpha}\| f \|_{L^2(\Omega_1)}.
\end{align*}

\end{proof}
Therefore, define a nonlinear coefficient to solution operator $U:\mathbb A \subset L^{\infty}(\Omega)\rightarrow H^1(\Omega_1)$ to map the coefficient $a \in \mathbb A \subset L^{\infty}(\Omega)$ to the solution $U(a) \in H^1(\Omega_1)$ of the problem (\ref{aa})-(\ref{aam}).
The inverse problem is stated as given $\hat{u}:= U(a) \in H^1(\Omega_1)$, solves $a\in \mathbb A$.
\subsection{Tikhonov regularization}
We assume $\hat{u}$ is exact solution of (\ref{aa})-(\ref{aam}), and there exists some $a \in \mathbb A$ such that $\hat{u}=U(a)$,  where the set $\mathbb A$ is defined by (\ref{ct}) and $U(a)$ is the coefficient-to-solution mapping. We assume that instead of exact $\hat{u}$ we only know its observations  $z^\delta \in H^1(\Omega_1)$ which satisfies
\begin{align*}
\| \hat{u}-z^\delta \|_{H^1(\Omega_1)}\leq \delta,~~\delta>0.
\end{align*}

Our problem changes to reconstruct $a$ from $z^\delta$, For solving this problem we give that the minimum convex functional is
\begin{align}\label{ac}
G_{z^\delta}(a):=\frac12 \int_{\Omega_1} a \abs {\nabla[U(a)-z^\delta]}^2
+\frac12\int_{\partial\Omega_1} a\gamma [U(a)-z^\delta]^2
-\frac12 \int_{\Omega_1} b [U(a)-z^\delta]^2,
\end{align}
on set $\mathbb A$. However, since the problem is ill-posed, we shall use Tikhonov regularization to solve it in a stable way. Namely, we solve the minimization problem
\begin{align*}
\min \limits_{a \in \mathbb A} &\frac12 \int_{\Omega_1} a \abs {\nabla[U(a)-z^\delta]}^2+\frac12\int_{\partial\Omega_1} a\gamma [U(a)-z^\delta]^2-\frac12 \int_{\Omega_1} b [U(a)-z^\delta]^2\\
&+ \rho \| a-a^* \|^2_{L^2(\Omega)}.
\end{align*}
Note that $a^*$ is a priori estimate of $a$, it does not have to belong to the admissible set $\mathbb A$, it may be an element of $L^2(\Omega)$. In addition, $\| a-a^* \|^2_{L^2(\Omega)}$ is meaningful because $a \in \mathbb A$.

It is proved that the objective function of the problem (\ref{ad}) is weakly lower semi-continuous and strictly convex in the $L^2(\Omega)$-norm, the unique solution $a^\delta_\rho\in  L^2(\Omega)$ is obtained in a bounded closed sets of nonempty convexity, so $\mathbb A$ is a weakly compact set. In addition, we will prove that the problem (\ref{ad}) has a unique solution $a^\delta_\rho$ with respect to data $z^\delta$ in the $L^2(\Omega)$-norm and it is well posed.

Before our proof, the concept of $a^*$-minimum norm solution and some properties of $U(a)$ are introduced.
\begin{lem}\label{la}
If set
\begin{align*}
\bigvee \limits_{\mathbb A} (\hat{u}) = \{ a \in \mathbb A | U(a)=\hat{u} \}
\end{align*}
is nonempty, convex, bounded and closed in the $L^2(\Omega)$-norm, then exist a unique solution $a^+$ such that
\begin{align*}
\min \limits_{a \in \bigvee \limits_{\mathbb A}(\hat{u})} \| a-a^* \|^2_{L^2(\Omega)}=\| a^+-a^* \|^2_{L^2(\Omega)}
\end{align*}
holds, which is called the $a^*$-minimum norm solution of the identification problem.
\end{lem}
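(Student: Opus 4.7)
The plan is to recognize Lemma~\ref{la} as an instance of the classical projection theorem onto a nonempty closed convex subset of a Hilbert space. Since $L^2(\Omega)$ is a Hilbert space, the functional
\[
J(a) := \tfrac12 \| a - a^* \|^2_{L^2(\Omega)}
\]
is continuous, coercive, and strictly convex on $L^2(\Omega)$, and the feasible set $K := \bigvee_{\mathbb A}(\hat{u})$ is assumed nonempty, convex, bounded, and $L^2$-closed. So the existence/uniqueness of a minimizer of $J$ on $K$ is a purely Hilbert-space fact, and I only need to assemble the standard direct-method argument.

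For existence I would take a minimizing sequence $\{a_n\}\subset K$ with $J(a_n)\to m:=\inf_{a\in K} J(a)$. Because $K$ is bounded in $L^2(\Omega)$ and $L^2(\Omega)$ is reflexive, the Banach--Alaoglu theorem extracts a subsequence $a_{n_k}\rightharpoonup a^+$ weakly in $L^2(\Omega)$. By Mazur's theorem, the convex set $K$ is strongly closed, hence weakly closed, so $a^+\in K$. Weak lower semi-continuity of the norm then gives
\[
\| a^+ - a^* \|^2_{L^2(\Omega)} \leq \liminf_{k\to\infty} \| a_{n_k}-a^* \|^2_{L^2(\Omega)} = 2m,
\]
so that $a^+$ attains the infimum in $K$.

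For uniqueness I would argue by strict convexity: if two distinct minimizers $a_1\neq a_2$ existed, then the convexity of $K$ would put $\tfrac12(a_1+a_2)\in K$, and the parallelogram identity in $L^2(\Omega)$ would give
\[
\Big\| \tfrac{a_1+a_2}{2} - a^* \Big\|^2_{L^2(\Omega)}
= \tfrac12\|a_1-a^*\|^2_{L^2(\Omega)} + \tfrac12\|a_2-a^*\|^2_{L^2(\Omega)} - \tfrac14\|a_1-a_2\|^2_{L^2(\Omega)} < m,
\]
contradicting minimality. Hence $a^+$ is unique, and may be called the $a^*$-minimum norm solution.

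There is no real obstacle here: the statement is essentially the Hilbert-space projection theorem applied to the closed convex set $\bigvee_{\mathbb A}(\hat u)$. The only point requiring any care is to justify that the $L^2$-norm closedness of $K$ together with its convexity yields weak closedness (Mazur), so that the weak limit of the minimizing sequence indeed lies in $K$; everything else is immediate from reflexivity of $L^2(\Omega)$ and strict convexity of the square of the Hilbert-space norm.
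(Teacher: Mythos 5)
Your argument is correct as a proof of the implication that the lemma literally states, but it is not the argument the paper gives, and the two address different halves of the problem. You take the hypotheses (nonempty, convex, bounded, closed in $L^2(\Omega)$) as given and carry out the standard Hilbert-space projection theorem: minimizing sequence, weak sequential compactness of bounded sets in the reflexive space $L^2(\Omega)$, Mazur's theorem to upgrade strong closedness of the convex set to weak closedness, weak lower semicontinuity of the norm for existence, and the parallelogram identity for uniqueness. (One trivial slip: with $m=\inf_K \tfrac12\|a-a^*\|^2_{L^2(\Omega)}$ the parallelogram identity gives $\bigl\|\tfrac{a_1+a_2}{2}-a^*\bigr\|^2_{L^2(\Omega)}=2m-\tfrac14\|a_1-a_2\|^2_{L^2(\Omega)}<2m$, not $<m$; the contradiction with minimality is unaffected.) The paper, by contrast, declares nonemptiness, convexity and boundedness ``obvious,'' leaves the projection step entirely implicit, and devotes its whole proof to verifying the one nontrivial hypothesis: that $\bigvee_{\mathbb A}(\hat{u})$ is closed in the $L^2(\Omega)$-norm. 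It does this by taking $a_n\to a$ in $L^2(\Omega)$ with $U(a_n)=\hat{u}$ and passing to the limit in the weak formulation $\int_{\Omega_1}a_n\nabla\hat{u}\nabla v+\int_{\partial\Omega_1}a_n\gamma\hat{u}v-\int_{\Omega_1}b\hat{u}v=\int_{\Omega_1}fv$ to conclude $U(a)=\hat{u}$. The two proofs are therefore complementary rather than equivalent: yours supplies the ``then'' of the lemma cleanly and in full generality, while the paper supplies the ``if,'' which is the part actually needed downstream (the element $a^+$ enters the source condition of Theorem 3.1, so one must know the feasible set really is closed). If you want your proof to do the work the authors intended, you should add the closedness verification for this specific set rather than assuming it.
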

\begin{proof}
It is obvious that $\bigvee \limits_{\mathbb A} (\hat{u}) $ is a nonempty, convex and bounded set, so we need to proof it is closed.

Assume the sequence $\{a_n\} \subset \bigvee \limits_{\mathbb A} (\hat{u}) $ converges to $a$ in the $L^2(\Omega)$-norm, we proof $a \in \bigvee \limits_{\mathbb A} (\hat{u}) $.

In fact, for all $v \in H^1(\Omega_1)$ and $n \in \mathbb N$, we have
\begin{align*}
\int_{\Omega_1} fv &= \int_{\Omega_1}-\mathrm{div}(a_n \nabla U(a_n))v-\int_{\Omega_1} b U(a_n)v\\
&=\int_{\Omega_1} a_n\nabla U(a_n) \nabla v-\int_{\partial\Omega_1}a_n\nabla U(a_n)\vec{n}v-\int_{\Omega_1} b U(a_n)v\\
&=\int_{\Omega_1} a_n\nabla  \hat{u} \nabla v+\int_{\partial\Omega_1}a_n\gamma \hat{u}v-\int_{\Omega_1} b \hat{u} v,
\end{align*}
by the Fat\'{o}u Lemma and the definition of $\{a_n\}$, we get
\begin{align*}
&\lim \limits_{n \rightarrow \infty} \inf \int_{\Omega_1} a_n \nabla \hat{u} \nabla v=
\int_{\Omega_1} \lim \limits_{n \rightarrow \infty} \inf a_n \nabla \hat{u} \nabla v=
\int_{\Omega_1} a \nabla \hat{u} \nabla v,\\
&\lim \limits_{n \rightarrow \infty}\inf \int_{\partial\Omega_1}a_nk\hat{u}v=
\int_{\partial\Omega_1}\lim \limits_{n \rightarrow \infty}\inf a_n\gamma\hat{u}v=
\int_{\partial\Omega_1}a\gamma\hat{u}v,
\end{align*}
for all $v \in H^1(\Omega_1)$ as $n \rightarrow \infty$, that is
\begin{align*}
\int_{\Omega_1} fv=\int_{\Omega_1} a \nabla \hat{u} \nabla v+\int_{\partial\Omega_1}a\gamma\hat{u}v-\int_{\Omega_1} b \hat{u} v,
\end{align*}
so we obtain $\hat{u}=U(a)$ or $a \in \bigvee \limits_{\mathbb A} ( \hat{u})$.
\end{proof}

\begin{lem}\label{lb}
The mapping $U:\mathbb A \in L^{\infty}(\Omega)\rightarrow H^1(\Omega_1)$ is continuous Fr\'{e}chet differentible for every $a \in {\mathbb A}$, Fr\'{e}chet differential $U'(a)$ has following property: differential $\eta:=U'(a)h$ is the unique weakly solution of Robin-boundary problem in $H^1_0(\Omega_1)$ space,
\begin{subequations}\label{aae}
\begin{align}
-\mathrm{div} (a \nabla \eta) -b\eta &=\mathrm{div}(h\nabla U(a))~~in ~ ~\Omega_1,\\
\nabla \eta \cdot \vec{n}+\gamma\eta &=0~on~\partial\Omega_1,
\end{align}
\end{subequations}
where $h \in L^{\infty}(\Omega)$, $\vec{n}$ is the normal vector of $\partial\Omega_1$, that satisfies the variation equation
\begin{align}\label{ae}
\int_{\Omega_1}  a \nabla U'(a)h \nabla v +\int_{\partial\Omega_1} a\gamma U'(a)hv-\int_{\Omega_1} b U'(a)hv=-\int_{\Omega_1}h\nabla U(a)\nabla v-\int_{\partial\Omega_1} hk U(a)v,
\end{align}
for all $v \in H^1(\Omega_1)$, and the estimate
\begin{align}\label{ax}
\|U'(a)h\|_{H^1(\Omega_1)} \leq \frac1{\alpha\beta} \|f\|_{L^2(\Omega)}\|h\|_{L^\infty(\Omega)}
\end{align}
holds for all $h \in L^{\infty}(\Omega)$, where $\beta=\frac{\underline{a}\widetilde{\gamma}-\overline{b}C_P} {C_F(1+\gamma C_t^2)}>0$.
\end{lem}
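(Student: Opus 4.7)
The plan is to establish the claim in three steps: well-posedness of the linearized Robin problem (\ref{aae}), the quantitative bound (\ref{ax}), and finally Fr\'{e}chet differentiability together with continuous dependence on $a$.

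First I would fix $a \in \mathbb{A}$ and $h \in L^{\infty}(\Omega)$ and apply the Lax--Milgram theorem to (\ref{ae}) to produce the unique weak solution $\eta \in H^1(\Omega_1)$. The bilinear form on the left of (\ref{ae}) is exactly the one whose coercivity was verified in the proof of Lemma \ref{ta}, so under the standing bound on $\overline{b}$ it is coercive on $H^1(\Omega_1)$ via the Friedrichs inequality (\ref{abb}). The right-hand side of (\ref{ae}) is a bounded linear functional on $H^1(\Omega_1)$: the volume term is dominated by $\|h\|_{L^{\infty}}\|\nabla U(a)\|_{L^2}\|\nabla v\|_{L^2}$, and the boundary term by $\gamma\|h\|_{L^{\infty}}\|U(a)\|_{L^2(\partial\Omega_1)}\|v\|_{L^2(\partial\Omega_1)}$, the latter controlled by $\gamma C_t^2\|h\|_{L^{\infty}}\|U(a)\|_{H^1}\|v\|_{H^1}$ via the trace inequality.

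Next I would derive (\ref{ax}) by testing (\ref{ae}) with $v=\eta$: the left side is bounded below by $\beta\|\eta\|_{H^1(\Omega_1)}^2$, with $\beta$ absorbing both the coercivity constant and the extra factor $(1+\gamma C_t^2)^{-1}$, while the right side is bounded above by $(1+\gamma C_t^2)\|h\|_{L^{\infty}}\|U(a)\|_{H^1}\|\eta\|_{H^1}$. Cancelling one factor of $\|\eta\|_{H^1}$ and inserting the a priori bound $\|U(a)\|_{H^1}\leq \|f\|_{L^2}/\alpha$ from Lemma \ref{ta} yields (\ref{ax}).

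To verify Fr\'{e}chet differentiability, set $r_h := U(a+h)-U(a)-\eta$ for $h$ small enough that $a+h\in\mathbb{A}$. Subtracting the weak formulations for $U(a+h)$, $U(a)$ and $\eta$, a direct computation shows that $r_h$ weakly solves a Robin problem of the same type as (\ref{aae}) whose source involves $-\mathrm{div}\bigl(h\nabla(U(a+h)-U(a))\bigr)$ together with the matching boundary contribution. Applying the energy estimate from step two (with this new source playing the role of $f$) gives $\|r_h\|_{H^1}\leq C\|h\|_{L^{\infty}}\|U(a+h)-U(a)\|_{H^1}$. A companion continuity estimate $\|U(a+h)-U(a)\|_{H^1}\leq C'\|h\|_{L^{\infty}}$, obtained by the same subtract-and-test technique applied directly to the nonlinear equation (\ref{aa}), then forces $\|r_h\|_{H^1}=O(\|h\|_{L^{\infty}}^2)=o(\|h\|_{L^{\infty}})$, so $\eta=U'(a)h$. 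Continuity of $a\mapsto U'(a)$ follows from one further subtraction: writing the variational equation for $U'(a_n)h-U'(a)h$ with $a_n\to a$ in $L^{\infty}$, the new source is controlled by $\|a_n-a\|_{L^{\infty}}$ and $\|U(a_n)-U(a)\|_{H^1}$, both of which vanish in the limit. The main difficulty will be bookkeeping in these subtract-and-test arguments: one must carefully track which terms carry factors of $\|h\|_{L^{\infty}}$ versus $\|U(a+h)-U(a)\|_{H^1}$, and the boundary trace terms must be handled consistently across each subtraction, so that the continuity estimate is available before being reinserted into the remainder bound.
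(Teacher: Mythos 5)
Your proposal is correct, and its first two steps (Lax--Milgram for the linearized Robin problem, then testing with $v=\eta$ and invoking Lemma \ref{ta} to get (\ref{ax})) coincide with the paper's argument. The one genuine difference is in the remainder estimate: you write the equation for $r_h=U(a+h)-U(a)-\eta$ with the bilinear form built from the coefficient $a$ and a source involving $h\nabla\bigl(U(a+h)-U(a)\bigr)$, which forces you to prove a separate Lipschitz bound $\|U(a+h)-U(a)\|_{H^1(\Omega_1)}\leq C'\|h\|_{L^\infty(\Omega)}$ before you can conclude $\|r_h\|=O(\|h\|^2_{L^\infty})$. The paper instead keeps the perturbed coefficient $a+h$ in the bilinear form, so that after subtracting the defining equation for $\eta$ the source for $r_h$ is $-\int_{\Omega_1}h\nabla\eta\nabla v-\int_{\partial\Omega_1}h\gamma\eta v$; the already-established bound $\|\eta\|_{H^1(\Omega_1)}\leq\frac1{\alpha\beta}\|f\|_{L^2(\Omega)}\|h\|_{L^\infty(\Omega)}$ then gives $O(\|h\|^2_{L^\infty})$ in one line with no auxiliary continuity lemma. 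Both decompositions are valid and the extra lemma you need is obtained by exactly the same test-function computation, so the cost is only one more subtract-and-test step; your version has the small advantage that the Lipschitz continuity of $U$ (and your added argument for continuity of $a\mapsto U'(a)$, which the paper asserts but never actually proves) comes out explicitly. One caveat: like the paper, you should be careful that the constant you call $\beta$ is consistent between the coercivity step and the trace-term bound --- as stated, absorbing $(1+\gamma C_t^2)^{-1}$ into the coercivity constant \emph{and} keeping the factor $(1+\gamma C_t^2)$ on the right-hand side double-counts it relative to the form of (\ref{ax}).
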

where the $C_t$ come from the trace theorem, we introduce it by a remark:
\begin{rem}\label{rem}
Let $\Omega\subset \mathbb R^M$ be a bounded smooth domain, there exists a unique map $\chi: W^{1,p}(\Omega)\rightarrow L^p(\partial\Omega)$ and a positive constant $C_t=C(n,p,\Omega)$ such that:
\begin{subequations}\label{tr}
\begin{align}
&\chi u= u|_{\partial\Omega},~~ \forall ~ u \in W^{1,p}(\Omega)\cap C(\bar{\Omega}),\\
&\|\chi u \|_{L^p(\partial\Omega)}\leq C_t\| u \|_{W^{1,p}(\Omega)},~~ \forall ~ u \in W^{1,p}(\Omega).
\end{align}
\end{subequations}
\end{rem}

\begin{proof}
Because $U(a)$ satisty $\nabla u \cdot \vec{n}+\gamma u =0$ from integration by parts, we obtain the variation equation
\begin{align}\label{ag}
\int_{\Omega_1}  a \nabla \eta \nabla v +\int_{\partial\Omega_1} a\gamma \eta v-\int_{\Omega_1} b \eta v=-\int_{\Omega_1}h\nabla U(a)\nabla v-\int_{\partial\Omega_1} h\gamma U(a) v,
\end{align}
for all $v \in H^1(\Omega_1)$, from (\ref{aae}a) defines the unique solution $\eta:=\eta(h)\in H^1(\Omega_1)$.

We will see that $\eta=\eta(h)$ define the bounded linear operator from $L^\infty(\Omega)$ to $H^1(\Omega_1)$ when $a$ is fixed in set $\mathbb A$.

Because $\eta$ is the linear operator of $h$, taking $v=\eta$, with Remark 2.1 we can obtain the inequality,
\begin{align*}
\alpha\|\eta\|^2_{H^1(\Omega_1)}
&\leq \int_{\Omega_1} \underline{a} \abs{ \nabla \eta}^2+\int_{\partial\Omega_1} a\gamma \eta^2 -\overline{b}\int_{\Omega_1}\eta^2\\
&\leq-\int_{\Omega_1}h\nabla U(a)\nabla \eta-\int_{\partial\Omega_1} h\gamma U(a) \eta\\
&\leq \|h\|_{L^\infty(\Omega)}(1+\gamma C_t^2)\| U(a)\|_{H^1(\Omega_1)}\|\eta\|_{H^1(\Omega_1)},
\end{align*}
with (\ref{ba}) we have
\begin{align}\label{aac}
\|\eta\|_{H^1(\Omega_1)}\leq \frac1{\alpha\beta} \|f\|_{L^2(\Omega)} \|h\|_{L^\infty(\Omega)},
\end{align}
so $\eta=\eta(h)$ is a bounded linear operator in $L^\infty(\Omega) \rightarrow H^1(\Omega_1)$.
Now we proof $U(a)$ is Fr\'{e}chet differentiable.

In fact, we choose a $h \in L^\infty(\Omega)$ such that $a+h \in \mathbb A$, then
\begin{align*}
\int_{\Omega_1}fv&=\int_{\Omega_1} a \nabla U(a) \nabla v+\int_{\partial\Omega_1} a\gamma U(a) v -\int_{\Omega_1} b U(a)v\\
&=\int_{\Omega_1} (a+h) \nabla U(a+h) \nabla v +\int_{\partial\Omega_1} (a+h)\gamma U(a+h) v-\int_{\Omega_1} b U(a+h)v,
\end{align*}
for all $v \in H^1(\Omega_1)$, that is
\begin{align*}
&\int_{\Omega_1} (a+h)\nabla\left[U(a+h)-U(a)\right]\nabla v
+\int_{\partial\Omega_1} (a+h)\gamma \left[U(a+h)-U(a)\right] v\\
&-\int_{\Omega_1}b\left[ U(a+h)-U(a) \right]v\\
=&-\int_{\Omega_1}h\nabla U(a)\nabla v-\int_{\partial\Omega_1} h\gamma U(a) v\\
=&\int_{\Omega_1}a \nabla\eta\nabla v+\int_{\partial\Omega_1} a\gamma \eta v-\int_{\Omega_1} b\eta v,
\end{align*}
where $\eta$ is defined by the variational equation (\ref{ag}), then
\begin{align*}
&\int_{\Omega_1} (a+h)\nabla\left[U(a+h)-U(a)-\eta \right]\nabla v
+\int_{\partial\Omega_1} (a+h)\gamma \left[U(a+h)-U(a)-\eta\right] v\\
&-\int_{\Omega_1}b\left[ U(a+h)-U(a)-\eta \right]v
=-\int_{\Omega_1}h \nabla \eta \nabla v-\int_{\partial\Omega_1} h\gamma \eta  v,
\end{align*}
taking $v= U(a+h)-U(a)-\eta\in H^1(\Omega_1)$, then
\begin{align*}
\int_{\Omega_1}(a+h)\abs{\nabla v}^2 +\int_{\partial\Omega_1} (a+h)\gamma v^2-\int_{\Omega_1}b v^2
=-\int_{\Omega_1} h\nabla \eta \nabla v-\int_{\partial\Omega_1} h\gamma \eta v,
\end{align*}
so we know
\begin{align*}
\alpha\|v\|^2_{H^1(\Omega_1)}
&\leq (1+\gamma C_t^2)\|h\|_{L^\infty(\Omega)} \|\eta\|_{H^1(\Omega_1)}\|v\|_{H^1(\Omega_1)}\\
&\leq\frac{1}{\alpha\beta^2} \|f\|_{L^2(\Omega)}\|h\|^2_{L^\infty(\Omega)} \|v\|_{H^1(\Omega_1)},
\end{align*}
with (\ref{aac}) and Remark 2.1, that is
\begin{align*}
\|U(a+h)-U(a)-\eta\|_{H^1(\Omega_1)}\leq \frac1{\alpha\beta^2} \|f\|_{L^2(\Omega)}\|h\|^2_{L^\infty(\Omega)}
=O(\|h\|^2_{L^\infty(\Omega)}).
\end{align*}

From $\eta=\eta(h)$ is the bounded linear operator of $h$ in $L^\infty(\Omega) \rightarrow H^1(\Omega_1)$ we obtain $U:=\mathbb A \in L^{\infty}(\Omega)\rightarrow H^1(\Omega_1)$ is continuous Fr\'{e}chet differentiable for every $a \in {\mathbb A}$, and its Fr\'{e}chet differential $U'(a)h$ is $\eta$ for all $h \in L^{\infty}(\Omega)$.
\end{proof}
\begin{lem}\label{lc}
the convex functional
\begin{align}\label{ah}
G_{z^\delta}(a):=\frac12 \int_{\Omega_1} a \abs {\nabla[U(a)-z^\delta]}^2
+\frac12\int_{\partial\Omega_1} a\gamma [U(a)-z^\delta]^2-\frac12 \int_{\Omega_1} b [U(a)-z^\delta]^2
\end{align}
is convex in the convex set $\mathbb A$.
\end{lem}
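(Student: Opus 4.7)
The plan is to establish convexity by computing the second directional derivative $\tfrac{d^2}{dt^2} G_{z^\delta}(a+th)\bigr|_{t=0}$ and showing it is non-negative for every admissible direction $h\in L^\infty(\Omega)$ with $a+th\in\mathbb{A}$ for small $|t|$.

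First I would simplify the functional by exploiting the weak formulation. Expanding the three squared terms in (\ref{ah}) and applying (\ref{cct}) twice, once with test function $v=U(a)$ to convert $\int_{\Omega_1} a|\nabla U(a)|^2 + \int_{\partial\Omega_1} a\gamma U(a)^2 - \int_{\Omega_1} b U(a)^2$ into $\int_{\Omega_1} f\,U(a)$, and once with $v=z^\delta$ to convert the cross terms into $\int_{\Omega_1} f z^\delta$, I expect $G_{z^\delta}$ to collapse to
\begin{equation*}
G_{z^\delta}(a) \;=\; \tfrac12\int_{\Omega_1} f\,U(a) \;+\; L(a) \;+\; C,
\end{equation*}
where $L(a) = \tfrac12\int_{\Omega_1} a|\nabla z^\delta|^2 + \tfrac12\int_{\partial\Omega_1} a\gamma(z^\delta)^2$ is affine in $a$ and $C$ is independent of $a$. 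Since the affine summand preserves convexity, it suffices to verify that the nonlinear piece $a\mapsto \int_{\Omega_1} f\,U(a)$ is convex.

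Next I would invoke Lemma \ref{lb} and set $\eta := U'(a)h$, which solves (\ref{ae}). Testing (\ref{cct}) with $v=\eta$, using symmetry of the bilinear form, and then substituting from (\ref{ae}) with $v=U(a)$, I expect the clean first-order identity $\int_{\Omega_1} f\,\eta = -\int_{\Omega_1} h|\nabla U(a)|^2 - \int_{\partial\Omega_1} h\gamma U(a)^2$. Differentiating once more in $a$ in direction $h$ (and absorbing the $\tfrac12$ prefactor) yields
\begin{equation*}
\tfrac{d^2}{dt^2} G_{z^\delta}(a+th)\Big|_{t=0} \;=\; -\int_{\Omega_1} h\nabla U(a)\nabla\eta \;-\; \int_{\partial\Omega_1} h\gamma U(a)\eta.
\end{equation*}
A final application of (\ref{ae}) with $v=\eta$ rewrites the right-hand side as the energy $\int_{\Omega_1} a|\nabla\eta|^2 + \int_{\partial\Omega_1} a\gamma\eta^2 - \int_{\Omega_1} b\eta^2$, and the coercivity bound in (\ref{aaz}), valid thanks to the structural hypothesis $\overline{b}<\underline{a}\widetilde{\gamma}/C_F$, gives the lower bound $\alpha\|\eta\|_{H^1(\Omega_1)}^2 \geq 0$. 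This proves convexity, and in fact strict convexity whenever $\eta\not\equiv 0$.

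The main obstacle will be bookkeeping: one must select the test functions in (\ref{cct}) and (\ref{ae}) so that the nonlinear terms cancel twice in succession, first reducing $G_{z^\delta}$ to ``$\tfrac12\int f\,U(a)$ plus affine'' and then converting the second derivative into the manifestly coercive bilinear form. Sign errors when expanding $|\nabla(U(a)-z^\delta)|^2$ and $[U(a)-z^\delta]^2$ are the likeliest failure mode. A chord-style alternative, verifying $G_{z^\delta}(a_1)+G_{z^\delta}(a_2)-2G_{z^\delta}((a_1+a_2)/2)\geq 0$ directly, would bypass second derivatives but still requires the same coercivity input applied to $U(a_1)-U(a_2)$.
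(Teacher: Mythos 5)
Your proposal is correct and follows essentially the same route as the paper: both arguments reduce the claim to showing that the second directional derivative in direction $h$ equals the energy form $\int_{\Omega_1} a\abs{\nabla\eta}^2+\int_{\partial\Omega_1} a\gamma\eta^2-\int_{\Omega_1} b\eta^2$ with $\eta=U'(a)h$, which is bounded below by $\alpha\|\eta\|^2_{H^1(\Omega_1)}\geq 0$ via the coercivity estimate. Your preliminary reduction of $G_{z^\delta}$ to $\tfrac12\int_{\Omega_1} f\,U(a)$ plus an affine-in-$a$ term is a tidy reorganization (the paper uses that same identity only later, when proving continuity in Theorem~\ref{tb}), but it changes nothing essential, since the affine part has vanishing second derivative.
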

\begin{proof}
because $\underline a \widetilde{\gamma}>\bar b C_F$, so
\begin{align*}
G_{z^\delta}(a)=&\frac12 \int_{\Omega_1} a \abs {\nabla[U(a)-z^\delta]}^2
+\frac12\int_{\partial\Omega_1} a\gamma [U(a)-z^\delta]^2-\frac12 \int_{\Omega_1} b [U(a)-z^\delta]^2\\
\geq&\frac{\underline{a}\widetilde{\gamma}-\overline{b}C_F}{2C_F} \|\nabla[U(a)-z^\delta]\|^2_{H^1(\Omega_1)}
\geq 0,
\end{align*}
for all $a \in \mathbb A$, then
\begin{align}\label{aaf}
G'_{z^\delta}(a)h
=& \int_{\Omega_1}a \nabla[U(a)-z^\delta]\nabla U'(a)h+
\frac12\int_{\Omega_1} \abs{\nabla [U(a)-z^\delta]}^2 h\nonumber\\
&+\int_{\partial\Omega_1}a\gamma [U(a)-z^\delta] U'(a)h+
\frac12\int_{\partial\Omega_1} \gamma[U(a)-z^\delta]^2 h\nonumber\\
&-\int_{\Omega_1} b [U(a)-z^\delta]U'(a)h,
\end{align}
for all $h \in L^{\infty}(\Omega)$, choosing $v=U(a)-z^\delta\in H^1(\Omega_1)$, the equation (\ref{aaf}) can be written as
\begin{align*}
G'_{z^\delta}(a)h
&=-\int_{\Omega_1} h\nabla U(a)\nabla v-\int_{\partial\Omega_1} h\gamma U(a) v+\frac12\int_{\Omega_1} \abs{\nabla v}^2h+\frac12\int_{\partial\Omega_1} \gamma v^2 h,
\end{align*}
with (\ref{ag}), so the second derivative of $G_{z^\delta}(a)$ is
\begin{align*}
G''_{z^\delta}(a)(h,k)
&=-\int_{\Omega_1}h\nabla U(a)\nabla U'(a)k-\int_{\partial\Omega_1} hk\gamma U(a) U'(a)\\
&=\int_{\Omega_1}hka \abs{\nabla U'(a)}^2+\int_{\partial\Omega_1} hk\gamma a (U'(a))^2 -\int_{\Omega_1}hkb(U'(a))^2
\end{align*}
for all $a \in \mathbb A$ and $h,k \in L^{\infty}(\Omega)$, let $h=k$, then
\begin{align*}
G''_{z^\delta}(a)h^2 \geq \alpha\|hU'(a)\|^2_{H^1(\Omega_1)},
\end{align*}
since $\alpha>0$, we get $G''_{z^\delta}(a)h^2 \geq 0$ for all $a \in \mathbb A$ and $h \in L^{\infty}(\Omega)$, so the functional $G_{z^\delta}(a)$ is convex in convex set $\mathbb A$.
\end{proof}

Now we proof the main conclusion of this subsection.
\begin{thm}\label{tb}
The problem (\ref{ad}) have a unique solution $a^{\delta}_{\rho}$.
\end{thm}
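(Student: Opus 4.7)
The plan is to use strong convexity imparted by the Tikhonov penalty to turn any minimizing sequence into a Cauchy sequence in $L^2(\Omega)$, and then transfer convergence through the coefficient-to-solution map $U$. Throughout, write $J_\rho(a) := G_{z^\delta}(a) + \rho\|a-a^*\|^2_{L^2(\Omega)}$.

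For uniqueness I rely on strict convexity: Lemma \ref{lc} gives convexity of $G_{z^\delta}$ on the convex set $\mathbb{A}$, and the parallelogram law makes $a \mapsto \rho\|a-a^*\|^2_{L^2(\Omega)}$ $\rho$-strongly convex on $L^2(\Omega)$; their sum $J_\rho$ is therefore strictly convex on $\mathbb{A}$, which precludes two distinct minimizers.

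For existence I pick a minimizing sequence $\{a_n\} \subset \mathbb{A}$ of $\mu := \inf_{\mathbb{A}} J_\rho \in [0,\infty)$ and apply the strong-convexity identity with $\lambda=1/2$ to get
\[
\tfrac{\rho}{4}\|a_n - a_m\|_{L^2(\Omega)}^2 \le \tfrac12 J_\rho(a_n) + \tfrac12 J_\rho(a_m) - J_\rho\!\left(\tfrac{a_n+a_m}{2}\right) \le \tfrac12 J_\rho(a_n) + \tfrac12 J_\rho(a_m) - \mu,
\]
where $(a_n+a_m)/2 \in \mathbb{A}$ by convexity of $\mathbb{A}$. Hence $\{a_n\}$ is Cauchy in $L^2(\Omega)$ with strong limit $\bar a \in \mathbb{A}$ (the set being $L^2$-closed). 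A subsequence converges a.e., and the uniform $L^\infty$-bound built into $\mathbb{A}$ then lifts this to strong convergence in every $L^p(\Omega)$, $p<\infty$, by dominated convergence. Following the passage-to-the-limit argument already carried out in Lemma \ref{la}, this yields $U(a_n) \to U(\bar a)$ in $H^1(\Omega_1)$, whence $G_{z^\delta}(a_n) \to G_{z^\delta}(\bar a)$ and $\|a_n - a^*\|^2_{L^2(\Omega)} \to \|\bar a - a^*\|^2_{L^2(\Omega)}$; therefore $J_\rho(\bar a) = \mu$ and $\bar a =: a^\delta_\rho$ is the unique minimizer.

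The main obstacle is the continuity statement $U : (\mathbb{A},\|\cdot\|_{L^2}) \to H^1(\Omega_1)$. Subtracting the weak formulations for $U(a_n)$ and $U(\bar a)$, testing against $U(a_n)-U(\bar a)$, and using the coercivity underlying Lemma \ref{ta} produces a cross term $\int_{\Omega_1}(a_n-\bar a)\nabla U(\bar a)\cdot\nabla(U(a_n)-U(\bar a))$ that H\"older naively bounds only in terms of $\|a_n - \bar a\|_{L^\infty(\Omega)}$. To feed in mere $L^2$-convergence one either invokes a Meyers-type higher integrability estimate for $\nabla U(\bar a)$ and splits with $1/p + 1/q = 1/2$ to exploit strong $L^p$-convergence of $a_n$, or, more in the spirit of Lemma \ref{la}, passes to an a.e.\ subsequence and applies dominated convergence directly inside each term of the weak formulation \eqref{cct}. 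Either route makes the identification $U(a_n)\to U(\bar a)$ rigorous and completes the existence proof.
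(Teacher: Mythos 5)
Your argument is correct in substance but takes a genuinely different route from the paper. The paper works entirely in the weak topology: it first shows that $a\mapsto G_{z^\delta}(a)$ is continuous on $\mathbb A$ with respect to the $L^2(\Omega)$-norm --- extracting a weakly convergent subsequence $U(a_{n_k})\rightharpoonup\theta$ in $H^1(\Omega_1)$, identifying $\theta=U(a)$ from the weak formulation, and then handling the quadratic terms by the rewriting $G_{z^\delta}(a_n)=\tfrac12\int_{\Omega_1}f[U(a_n)-2z^\delta]+\tfrac12\int_{\Omega_1}a_n|\nabla z^\delta|^2+\dots$, which turns the troublesome term $\int a_n|\nabla U(a_n)|^2$ into a weakly continuous linear functional of $U(a_n)$ --- and then invokes convexity $+$ continuity $\Rightarrow$ weak lower semicontinuity together with weak compactness of $\mathbb A$. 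You instead exploit the $\rho$-strong convexity of the Tikhonov penalty to make every minimizing sequence Cauchy in $L^2(\Omega)$, obtaining a \emph{strong} limit and folding existence and uniqueness into one mechanism; the paper's uniqueness claim, by contrast, is only implicit (its Lemma on convexity of $G_{z^\delta}$ gives strict convexity of the regularized functional once the penalty is added, but the proof of the theorem never says so explicitly). Your route buys a quantitative stability estimate for minimizing sequences and avoids weak compactness altogether, at the price of needing genuine continuity of $U$ along strongly $L^2$-convergent coefficient sequences --- the obstacle you correctly isolate, and for which your a.e.-subsequence/dominated-convergence fix is essentially a rigorous version of what the paper does loosely when it asserts that $\int_{\Omega_1}(a_{n_k}-a)\nabla U(a_{n_k})\nabla v\to0$ from $L^2$-convergence alone. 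Two caveats apply equally to both proofs: $\mathbb A$ as defined with strict inequalities is not actually closed in $L^2(\Omega)$ (one should pass to $\underline a\le a\le\overline a$), and the boundary integrals $\int_{\partial\Omega_1}(a_n-a)\gamma\,U\,v$ are not controlled by $L^2(\Omega)$-convergence of $a_n$, since a.e.\ convergence on $\Omega$ says nothing on the measure-zero set $\partial\Omega_1$; neither your argument nor the paper's addresses this.
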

\begin{proof}
First, we prove that the functional $G_{z^\delta}(a)$ is continuous in set $\mathbb A$ with respect to $L^2(\Omega)$-norm.

Assume sequence $\{a_n\} \subset \mathbb A$ converge to $a$ in $L^2(\Omega)$-norm. For all $v \in H^1(\Omega_1)$ and $n \in \mathbb N$, we obtain
\begin{subequations}\label{ak}
\begin{align}
\int_{\Omega_1}fv&=\int_{\Omega_1}  a \nabla U(a) \nabla v -\int_{\partial\Omega_1}a\nabla U(a)\vec{n}v-\int_{\Omega_1} b U(a)v\nonumber\\
&=\int_{\Omega_1}  a \nabla U(a) \nabla v -\int_{\partial\Omega_1}a\gamma U(a)v-\int_{\Omega_1} b U(a)v,\\
\int_{\Omega_1}fv&=\int_{\Omega_1} a_n \nabla U(a_n) \nabla v- \int_{\partial\Omega_1}a_n\nabla U(a_n)\vec{n}v-\int_{\Omega_1} b U(a_n)v\nonumber\\
&=\int_{\Omega_1} a_n \nabla U(a_n) \nabla v- \int_{\partial\Omega_1}a_n\gamma U(a_n)v-\int_{\Omega_1} b U(a_n)v,
\end{align}
\end{subequations}
from (\ref{ba}) can obtain
\begin{align}\label{aj}
\| U(a_n) \|_{H^1(\Omega_1)} \leq \frac 1 \alpha\| f \|_{L^2(\Omega)},
\end{align}
so exist some $\theta \in H^1(\Omega_1)$ such that the subset of $\{U(a_n)\}$ defined by $\{U(a_{nk})\}$ weakly converge to $\theta$ in $H^1(\Omega_1)$, therefore
\begin{align}\label{aah}
&\int_{\Omega_1} a_{nk} \nabla U(a_{nk}) \nabla v -\int_{\partial\Omega_1}a_{nk}\gamma U(a_{nk})v-\int_{\Omega_1}b  U(a_{nk})v\nonumber\\
&-\left(\int_{\Omega_1} a \nabla \theta \nabla v -\int_{\partial\Omega_1}a\gamma \theta v-\int_{\Omega_1} b  \theta v\right)\nonumber \nonumber\\
=&\int_{\Omega_1}  (a_{nk}-a)\nabla U(a_{nk})\nabla v +\int_{\Omega_1} a (\nabla U(a_{nk})-\nabla \theta)\nabla v\nonumber\\
&-\int_{\partial\Omega_1}(a_{nk}-a)\gamma U(a_{nk})v
-\int_{\partial\Omega_1}a\gamma [U(a_{nk})-\theta]v
- \int_{\Omega_1} b  (U(a_{nk})-\theta) v.
\end{align}

Due to $\{a_n\}$ converges to $a$ in $L^2(\Omega)$-norm and (\ref{aj}), the first term and the third term on the right side of (\ref{aah}) tends to zero as $n \rightarrow \infty$.

On the other hand, since $\{U(a_n)\}$ converges weakly to $\theta$ in $H^1(\Omega)$ and $a,b$ is bounded, the second term, the forth term and the fifth term on the right side of (\ref{aah}) tends to zero as $n \rightarrow \infty$.

So we have
\begin{align}\label{al}
&\int_{\Omega_1} a_{nk} \nabla U(a_{nk}) \nabla v -\int_{\partial\Omega_1}a_{nk}\gamma U(a_{nk})v-\int_{\Omega_1}b  U(a_{nk})v\nonumber\\
&\rightarrow\int_{\Omega_1} a \nabla \theta \nabla v -\int_{\partial\Omega_1}a\gamma \theta v-\int_{\Omega_1} b  \theta v,
\end{align}
for all $v \in H^1(\Omega_1)$ as $n \rightarrow \infty$.

Combine (\ref{ak}) and (\ref{al}), we get $U(a)=\theta$.

Now we proof $G_{z^\delta}(a_n)\rightarrow G_{z^\delta}(a)$ when $n \rightarrow \infty$.

we have
\begin{align*}
G_{z^\delta}(a_n)
=&\frac12 \int_{\Omega_1} a_n\abs {\nabla[U(a_n)-z^\delta]}^2
+\frac12\int_{\partial\Omega_1} a_n\gamma [U(a_n)-z^\delta]^2
-\frac12 \int_{\Omega_1} b[U(a_n)-z^\delta]^2\\
=&\frac12\int_{\Omega_1}a_n\nabla [U(a_n)-2z^\delta]\nabla U(a_n)
+\frac12\int_{\Omega_1}a_n(\nabla z^\delta)^2\\
&+\frac12\int_{\partial\Omega_1}a_n\gamma[U(a_n)-2z^\delta]U(a_n)
+\frac12\int_{\partial\Omega_1}a_n\gamma(z^\delta)^2\\
&-\frac12 \int_{\Omega_1}b [U(a_n)-2z^\delta]U(a_n)
-\frac12 \int_{\Omega_1}b(z^\delta)^2,
\end{align*}
Due to (\ref{ak}a), we get
\begin{align*}
G_{z^\delta}(a_n)=&\frac12 \int_{\Omega_1}f[U(a_n)-2z^\delta]
+\frac12\int_{\Omega_1}a_n\abs{\nabla z^\delta}^2
+\frac12\int_{\partial\Omega_1}a_n\gamma(z^\delta)^2
-\frac12 \int_{\Omega_1}b(z^\delta)^2,
\end{align*}
use the same way, we know
\begin{align*}
G_{z^\delta}(a)
\rightarrow
\frac12 \int_{\Omega_1}f[U(a)-2z^\delta]+\frac12\int_{\Omega_1}a\abs{\nabla z^\delta}^2
+\frac12\int_{\partial\Omega_1}a\gamma(z^\delta)^2
-\frac12 \int_{\Omega_1}b(z^\delta)^2,
\end{align*}
so $G_{z^\delta}(a_n)\rightarrow G_{z^\delta}(a)$ as $n \rightarrow \infty$.

Now we proof (\ref{ad}) has a unique solution.

Because functional $G_{z^\delta}(a)$ is convex and continuous in set $\mathbb A$ respect to the $L^2(\Omega)$-norm and it is weakly lower semi-continuous. Therefore, the cost functional of (\ref{ad}) is convex and weakly lower semi-continuous in set $\mathbb A$. On the other hand, set $\mathbb A$ is nonempty convex closed bounded in the $L^2(\Omega)$-norm, it is weakly compact, so (\ref{ad}) has a unique solution.
\end{proof}

\begin{thm}\label{tc}
For a fixed regularization parameter $\rho >0$, let $\{z_n\}$ be a sequence which converges to $z^{\delta}$ in $H^1(\Omega_1)$, $\{a_n\}$ be the minimizers of the following problems:
\begin{align*}
\min \limits_{a \in \mathbb A} \frac12 \left\{\int_{\Omega_1} a \abs {\nabla[U(a)-z_n]}^2+\int_{\partial\Omega_1} a\gamma [U(a)-z_n]^2- \int_{\Omega_1} b[U(a)-z_n]^2\right\}+ \rho \| a-a^* \|^2_{L^2(\Omega)}
\end{align*}
so $\{a_n\}$ converges to the solution of (\ref{ad}) called $a^\delta_\rho$.
\end{thm}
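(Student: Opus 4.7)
The plan is to follow the standard paradigm for stability of Tikhonov regularization: extract a weakly convergent subsequence of the minimizers $\{a_n\}$, identify its limit as $a^\delta_\rho$ using the uniqueness established in Theorem \ref{tb}, and then upgrade to strong convergence of the whole sequence via convergence of norms.

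Since $a_n \in \mathbb{A}$ for every $n$, the sequence is uniformly bounded in $L^\infty(\Omega)$, hence in $L^2(\Omega)$. The admissible set $\mathbb{A}$ is convex, closed and bounded in $L^2(\Omega)$, therefore weakly closed, so after extracting a subsequence one gets $a_n \rightharpoonup \tilde{a}$ in $L^2(\Omega)$ with $\tilde{a}\in\mathbb{A}$. The minimizing inequality $J_n(a_n)\leq J_n(a^\delta_\rho)$, where $J_n(a):=G_{z_n}(a)+\rho\|a-a^*\|^2_{L^2(\Omega)}$, provides the comparison one needs. Because $z_n\to z^\delta$ in $H^1(\Omega_1)$ and $U(a^\delta_\rho)$ is fixed, expanding the quadratic terms in $G_{z_n}$ and using the triangle inequality shows that the right-hand side converges to $J(a^\delta_\rho):=G_{z^\delta}(a^\delta_\rho)+\rho\|a^\delta_\rho-a^*\|^2_{L^2(\Omega)}$.

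For the left-hand side I will show $J(\tilde a) \leq \liminf_n J_n(a_n)$. The norm term is handled by weak lower semicontinuity: $\|\tilde a-a^*\|^2_{L^2(\Omega)}\leq\liminf_n\|a_n-a^*\|^2_{L^2(\Omega)}$. For the data-fidelity part I would first argue, as in the proof of Theorem \ref{tb}, that along a further subsequence $U(a_n)\rightharpoonup U(\tilde a)$ in $H^1(\Omega_1)$: the sequence $\{U(a_n)\}$ is uniformly bounded in $H^1(\Omega_1)$ by Lemma \ref{ta}, its weak limit $\theta$ is identified by passing to the limit in the variational identity (\ref{cct}), and one uses Rellich compactness $H^1(\Omega_1)\hookrightarrow L^2(\Omega_1)$ and the compactness of the trace into $L^2(\partial\Omega_1)$ to dispose of the cross terms. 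Combining this with $z_n\to z^\delta$ in $H^1(\Omega_1)$ and the weak lower semicontinuity of the quadratic forms $a\mapsto\int a|\nabla\cdot|^2$ and $a\mapsto\int a\gamma(\cdot)^2$ on $\mathbb{A}$, I obtain $G_{z^\delta}(\tilde a)\leq\liminf_n G_{z_n}(a_n)$. Adding the two lower bounds and comparing with the limsup estimate yields $J(\tilde a)\leq J(a^\delta_\rho)$, so $\tilde a = a^\delta_\rho$ by the uniqueness in Theorem \ref{tb}. A standard subsequence principle then promotes the weak convergence to the whole sequence.

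To upgrade to strong $L^2$-convergence I combine $\liminf_n J_n(a_n) \geq J(a^\delta_\rho)$ with $\limsup_n J_n(a_n)\leq J(a^\delta_\rho)$, obtaining $\lim_n J_n(a_n)=J(a^\delta_\rho)$; the same joint continuity argument also yields $G_{z_n}(a_n)\to G_{z^\delta}(a^\delta_\rho)$, and subtracting forces $\|a_n-a^*\|_{L^2(\Omega)}\to\|a^\delta_\rho-a^*\|_{L^2(\Omega)}$. In the Hilbert space $L^2(\Omega)$, weak convergence together with convergence of norms forces strong convergence, giving $a_n\to a^\delta_\rho$ in $L^2(\Omega)$. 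The main technical obstacle will be the joint passage to the limit in $G_{z_n}(a_n)$: the bulk term $\int_{\Omega_1}a_n|\nabla(U(a_n)-z_n)|^2$ couples the only-weakly-convergent $a_n$ with the nonlinear quantity $|\nabla(U(a_n)-z_n)|^2$, so one needs enough compactness on $\nabla U(a_n)$ to justify multiplying a weakly convergent factor against a merely weakly convergent quadratic one; the boundary term imposes the analogous requirement through the trace. These compactness ingredients, rather than the abstract variational machinery, are the heart of the argument.
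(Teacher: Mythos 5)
Your overall architecture matches the paper's: uniform boundedness of $\{a_n\}$ in $\mathbb{A}$, extraction of a weak $L^2$-limit, identification of that limit with $a^\delta_\rho$ via the minimizing inequality and uniqueness, and an upgrade to strong convergence using convergence of the $L^2$-norms (the paper does this last step by a contradiction argument instead, but that difference is cosmetic). The problem is that you leave unresolved precisely the step on which everything hinges, and the mechanism you propose for it would not work. You need $\liminf_n G_{z_n}(a_n)\geq G_{z^\delta}(a^\delta_\rho)$, and you propose to get it from ``weak lower semicontinuity of the quadratic forms'' together with ``enough compactness on $\nabla U(a_n)$.'' But the $H^1(\Omega_1)$ bound of Lemma \ref{ta} only gives weak $L^2$ convergence of $\nabla U(a_n)$; there is no compact embedding at the level of gradients, and the functional $(a,u)\mapsto\int_{\Omega_1}a\abs{\nabla u}^2$ is not weakly lower semicontinuous when \emph{both} arguments converge only weakly (a product of two merely weakly convergent sequences has no distinguished limit). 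So the ``heart of the argument,'' as you call it, is exactly the part your plan cannot deliver.

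The paper avoids this issue by a different decomposition. First, the discrepancy $G_{z^\delta}(a_n)-G_{z_n}(a_n)$ is computed explicitly; every term is linear or quadratic in $z_n-z^\delta$ with coefficients bounded uniformly in $n$ (by (\ref{ba})), so it tends to $0$ by the strong convergence $z_n\to z^\delta$ in $H^1(\Omega_1)$, giving $\liminf_n G_{z_n}(a_n)=\liminf_n G_{z^\delta}(a_n)$. Second, the weak lower semicontinuity of the \emph{fixed-data} functional $a\mapsto G_{z^\delta}(a)$ is obtained in Theorem \ref{tb} not by attacking the quadratic coupling head-on, but by using the weak formulation (\ref{cct}) to rewrite
\begin{align*}
G_{z^\delta}(a_n)=\frac12\int_{\Omega_1}f[U(a_n)-2z^\delta]+\frac12\int_{\Omega_1}a_n\abs{\nabla z^\delta}^2+\frac12\int_{\partial\Omega_1}a_n\gamma(z^\delta)^2-\frac12\int_{\Omega_1}b(z^\delta)^2,
\end{align*}
which is linear in $U(a_n)$ and linear in $a_n$; continuity in the $L^2(\Omega)$-norm then follows from the weak convergence $U(a_n)\rightharpoonup U(a)$ alone, and convexity (Lemma \ref{lc}) upgrades this to weak lower semicontinuity. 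If you replace your compactness step by this rewriting (or simply invoke the weak lower semicontinuity already established in the proof of Theorem \ref{tb}), the rest of your argument goes through.
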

\begin{proof}
From the definition of $\{a_n\}$, there is
\begin{align*}
G_{z_n}(a_n)+\rho \| a_n-a^* \|^2_{L^2(\Omega)} \leq G_{z_n}(a)+\rho \| a-a^* \|^2_{L^2(\Omega)}
\end{align*}
for each $a_n \in \mathbb A$, in the $L^2(\Omega)$ norm, we know
\begin{align*}
G_{z_n}(a)&= \frac12 \int_{\Omega_1} a \abs {\nabla[U(a)-z_n]}^2
+\frac12\int_{\partial\Omega_1} a\gamma [U(a)-z_n]^2
-\frac12 \int_{\Omega_1} b[U(a)-z_n]^2\\
&\leq (1+\gamma C_t^2)\int_{\Omega_1}a\abs{\nabla U(a)}^2+ \abs{\nabla z_n}^2,
\end{align*}
there exists a constant $C_1$ independent of $n$ satisty $\|z_n\|^2_{H^1(\Omega_1)} \leq C_1$ for all $n \in \mathbb N$, then
\begin{align*}
G_{z_n}(a_n)+\rho \| a_n-a^* \|^2_{L^2(\Omega)}\leq (1+\gamma C_t^2)\bar a[\| U(a)\|^2_{H^1(\Omega_1)}+C_1]+\rho \| a-a^* \|^2_{L^2(\Omega)},
\end{align*}
for all $a \in \mathbb A$.

It follows that $\{a_n\}$ is bounded and there exists a subsequence of $\{a_n\}$ denoted by $\{a_{nk}\}$ and a element $a^\delta_\rho \in L^2(\Omega)$ such that $\{a_{nk}\}$ weakly converges to $a^\delta_\rho$ in the $L^2(\Omega)$ norm.

According to $\mathbb A$ is convex and closed in $L^2(\Omega)$-norm, we get the conclusion that $a^\delta_\rho \in \mathbb A$. On the other side, because $G_{z^\delta}(\cdot)$ and norm $\| \cdot \|_{L^2(\Omega)}$ is weakly lower semi-continous, we have
\begin{subequations}\label{an}
\begin{align}
G_{z^\delta}(a^\delta_\rho) &\leq \liminf \limits_{n \in \mathbb N}G_{z^\delta}(a_n),\\
\| a^\delta_\rho-a^* \|^2_{L^2(\Omega)}&\leq \liminf \limits_{n \in \mathbb N} \| a_n-a^* \|^2_{L^2(\Omega)},
\end{align}
\end{subequations}
besides,
\begin{align*}
G_{z^\delta}(a_n)-G_{z_n}(a_n)
=&\int_{\Omega_1}a_n \nabla U(a_n)\nabla(z_n-z^\delta)
-\frac12 \int_{\Omega_1}a_n\nabla(z_n-z^\delta)\nabla(z_n+z^\delta)\\
&+\int_{\partial\Omega_1} a_n \gamma U(a_n)(z_n-z^\delta)
-\frac12\int_{\partial\Omega_1} a_n \gamma (z_n-z^\delta)(z_n+z^\delta)\\
&-\int_{\Omega_1}b U(a_n)(z_n-z^\delta)
+\frac12 \int_{\Omega_1}b(z_n-z^\delta)(z_n+z^\delta)\\
=&\int_{\Omega_1}a_n\nabla[U(a_n)-z_n] \nabla(z_n-z^\delta)
+\frac12 \int_{\Omega_1}a_n\abs{\nabla(z_n -z^\delta)}^2 \\
&+\int_{\partial\Omega_1} a_n \gamma [U(a_n)-z_n](z_n -z^\delta)
-\frac12\int_{\partial\Omega_1} a_n\gamma (z_n-z^\delta)^2\\
&-\int_{\Omega_1}b[U(a_n)-z_n](z_n -z^\delta)
-\frac12 \int_{\Omega_1}b(z_n-z^\delta)^2,
\end{align*}
because $\{z_n\}$ converges to $z^\delta$ in $H^1(\Omega_1)$ as $n \rightarrow \infty$, we get
\begin{align}\label{ao}
\liminf \limits_{n \in \mathbb N}G_{z^\delta}(a_n) = \liminf \limits_{n \in \mathbb N}G_{z_n}(a_n).
\end{align}
and
\begin{align}\label{ap}
&G_{z^\delta}(a^\delta_\rho)+\rho\|a^\delta_\rho-a^* \|^2_{L^2(\Omega)}\nonumber\\
\leq& \liminf \limits_{n \in \mathbb N}G_{z^\delta}(a_n)
+\rho \liminf \limits_{n \in \mathbb N} \| a_n-a^* \|^2_{L^2(\Omega)}\nonumber\\
=&\liminf \limits_{n \in \mathbb N}G_{z_n}(a_n)
+\rho \liminf \limits_{n \in \mathbb N} \| a_n-a^* \|^2_{L^2(\Omega)}\nonumber\\
\leq& \liminf \limits_{n \in \mathbb N}\left[ G_{z_n}(a_n)+\rho\| a_n-a^* \|^2_{L^2(\Omega)} \right] \nonumber\\
\leq& \limsup \limits_{n \in \mathbb N}\left[ G_{z_n}(a_n)+\rho\| a_n-a^* \|^2_{L^2(\Omega)} \right] \nonumber\\
\leq& \limsup \limits_{n \in \mathbb N}\left[ G_{z_n}(a)+\rho\| a-a^* \|^2_{L^2(\Omega)} \right] \nonumber\\
=&G_{z^\delta}(a)+\rho\|a-a^* \|^2_{L^2(\Omega)}
\end{align}
for all $a \in \mathbb A$ from (\ref{an}) and (\ref{ao}). So $a^\delta_\rho$ is the unique solution of (\ref{ad}).

We proof $\{a_{n}\}$ converges to $a^\delta_\rho$ in $L^2(\Omega)$-norm by contradiction.

In fact, assume $\{a_n\} \nrightarrow a^\delta_\rho$, this and (\ref{an}b) follow that
\begin{align}\label{aag}
\varepsilon:&=\limsup \limits_{n \in \mathbb N}\|a_n-a^* \|^2_{L^2(\Omega)}> \|a^\delta_\rho-a^* \|^2_{L^2(\Omega)}
\end{align}
there exists a subsequence $\{a_{nj}\}$ of $\{a_n\} $ such that $\{a_{nj}\}$ weakly converges to $a^\delta_\rho$ in $L^2(\Omega)$ -norm and
\begin{align}\label{aq}
\|a^\delta_\rho-a^* \|^2_{L^2(\Omega)} \rightarrow \varepsilon.
\end{align}
taking $a=a^\delta_\rho$ in (\ref{ap}), we get
\begin{align}\label{ar}
G_{z^\delta}(a^\delta_\rho)+\rho\|a^\delta_\rho-a^* \|^2_{L^2(\Omega)}\nonumber
=&\lim \limits_{nj \in \mathbb N}\left[ G_{z_{nj}}(a_{nj})+\rho\|a_{nj}-a^* \|^2_{L^2(\Omega)} \right]\nonumber\\
\geq & \liminf \limits_{nj \in \mathbb N} G_{z_{nj}}(a_{nj})+\rho \varepsilon,
\end{align}
by (\ref{aq}). Combining (\ref{ao}), (\ref{aag})and (\ref{ar}), we obtain
\begin{align*}
\liminf \limits_{nj \in \mathbb N} G_{z^\delta}(a_{nj})
\leq G_{z^\delta}(a^\delta_\rho)+\rho \left( \|a^\delta_\rho -a^* \|^2_{L^2(\Omega)}-\varepsilon \right)
< G_{z^\delta}(a^\delta_\rho),
\end{align*}
that is in contradiction with (\ref{an}a), so $\{a_n\} \rightarrow a^\delta_\rho$ as $n\rightarrow \infty$.
\end{proof}

\section{convergence rate}
Because $L^\infty(\Omega)=L^1(\Omega)^*  \subset L^\infty(\Omega)^*$, any $a\in L^\infty(\Omega)$ can be cinsidered as an element in $L^\infty(\Omega)^*$, the dual space of $L^\infty(\Omega)$, by
\begin{align}\label{au}
\langle a,h\rangle_{(L^\infty(\Omega)^*,L^\infty(\Omega))}=\int_{\Omega}ah,\quad
\|a \|_{L^\infty(\Omega)^*} \leq \mathrm{mes}(\Omega)\|a \|_{L^\infty(\Omega)},
\end{align}
for all $h \in L^\infty(\Omega)$. Besides, the mapping
\begin{align*}
U'(a):L^\infty(\Omega)\rightarrow H^1(\Omega_1),
\end{align*}
is a continuous linear operator for $a \in \mathbb A$. We denote
\begin{align*}
U'(a)^*: H^1(\Omega_1)^*=H^{-1}(\Omega_1)\rightarrow   L^\infty(\Omega)^*,
\end{align*}
is the dual operator of $U'(a)$. Therefore,
\begin{align}\label{av}
\langle U'(a)^* \omega ^*, h\rangle_{(L^\infty(\Omega)^*,L^\infty(\Omega))}=
\langle\omega ^*,U'(a)h\rangle_{(H^{-1}(\Omega),H^1(\Omega_1))}
\end{align}
for all $\omega ^* \in H^{-1}(\Omega_1)$ and $h \in L^\infty(\Omega)$.

Denote $C^\infty_c(\Omega)$ is the set of infinityly differentiable functions with compact support lying in $\Omega$, then the Lemma 2.5 is given.

\begin{lem}\label{ld}
Denote $(\psi_\rho)_{\rho \in (0,1)}$ is a family of functions of $C^\infty_c(\Omega_1)$ space which are bounded in $H^{1}(\Omega)$-norm, $(\phi_\rho)_{\rho \in (0,1)}$ is a family of unique solutions of the problems
\begin{subequations}\label{ai}
\begin{align}
-\mathrm{div}(a\nabla\phi_\rho)-b\phi_\rho&=\psi_\rho-\Delta \psi_\rho~in ~ \Omega_1 ,\\
\phi_\rho&=0 ~on~ \partial\Omega_1,
\end{align}
\end{subequations}
in $H_0^1(\Omega_1)$ with $a\in \mathbb A$ is a fixed element.

So there exist a constant $C_2>0$ such that
\begin{align*}
\max\left\{ \int_{\Omega_1}\phi_\rho^2,  \int_{\Omega_1}\abs{\nabla\phi_\rho}^2\right\} \leq C_2^2,\quad \forall \rho \in (0,1)
\end{align*}
\end{lem}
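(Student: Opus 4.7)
The plan is a standard $H^1_0$-energy estimate combined with Lax--Milgram. First I would verify existence and uniqueness of $\phi_\rho$ in $H^1_0(\Omega_1)$. The bilinear form $B(u,v)=\int_{\Omega_1} a\nabla u\cdot\nabla v-\int_{\Omega_1} buv$ is continuous on $H^1_0(\Omega_1)\times H^1_0(\Omega_1)$, and the assumption $\bar b<\underline{a}/C_P$ together with the Poincar\'e inequality (\ref{at}) gives coercivity:
\begin{align*}
B(u,u)\ \geq\ \underline{a}\int_{\Omega_1}|\nabla u|^2-\bar b\int_{\Omega_1}u^2\ \geq\ (\underline{a}-\bar b C_P)\int_{\Omega_1}|\nabla u|^2,
\end{align*}
and Poincar\'e again shows this dominates a positive multiple of $\|u\|_{H^1(\Omega_1)}^2$. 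So Lax--Milgram gives a unique $\phi_\rho\in H^1_0(\Omega_1)$ provided the right-hand side $\psi_\rho-\Delta\psi_\rho$ defines a continuous linear functional on $H^1_0(\Omega_1)$, which I address next.

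Next I would test the weak formulation against $\phi_\rho$ itself. Because $\psi_\rho\in C^\infty_c(\Omega_1)$ and $\phi_\rho\in H^1_0(\Omega_1)$, both functions vanish on $\partial\Omega_1$, so integration by parts is clean:
\begin{align*}
\int_{\Omega_1}(\psi_\rho-\Delta\psi_\rho)\phi_\rho\ =\ \int_{\Omega_1}\psi_\rho\phi_\rho+\int_{\Omega_1}\nabla\psi_\rho\cdot\nabla\phi_\rho\ \leq\ \|\psi_\rho\|_{H^1(\Omega_1)}\|\phi_\rho\|_{H^1(\Omega_1)}
\end{align*}
by Cauchy--Schwarz. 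On the left side I apply the coercivity estimate above to get
\begin{align*}
B(\phi_\rho,\phi_\rho)\ \geq\ c\,\|\phi_\rho\|_{H^1(\Omega_1)}^2
\end{align*}
for some explicit constant $c=(\underline{a}-\bar b C_P)/(1+C_P)>0$ (using that $\|\phi_\rho\|_{H^1}^2\leq(1+C_P)\|\nabla\phi_\rho\|_{L^2}^2$ for $\phi_\rho\in H^1_0$). Combining yields
\begin{align*}
\|\phi_\rho\|_{H^1(\Omega_1)}\ \leq\ \tfrac{1}{c}\,\|\psi_\rho\|_{H^1(\Omega_1)},
\end{align*}
and since the family $(\psi_\rho)$ is uniformly bounded in $H^1$ by hypothesis, say by $M$, I obtain the uniform bound with $C_2:=M/c$.

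The one step that could be considered delicate is justifying the integration by parts $\int_{\Omega_1}(-\Delta\psi_\rho)\phi_\rho=\int_{\Omega_1}\nabla\psi_\rho\cdot\nabla\phi_\rho$; but because $\psi_\rho\in C^\infty_c(\Omega_1)$ its support sits strictly inside $\Omega_1$, so the boundary contribution $\int_{\partial\Omega_1}(\nabla\psi_\rho\cdot\vec n)\phi_\rho$ vanishes automatically, and the identity holds in the classical sense. Everything else is routine linear elliptic theory using the hypotheses already exploited in Lemma \ref{ta}. No deeper obstacle is expected.
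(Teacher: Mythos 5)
Your proposal is correct and follows essentially the same route as the paper: test the weak formulation with $v=\phi_\rho$, move $-\Delta\psi_\rho$ onto the gradient via integration by parts (harmless since $\psi_\rho\in C^\infty_c(\Omega_1)$), and use the Poincar\'e inequality together with $\bar b<\underline{a}/C_P$ to get coercivity and hence a uniform $H^1$ bound from the uniform bound on $\|\psi_\rho\|_{H^1}$. The only cosmetic difference is that you apply Cauchy--Schwarz directly in the $H^1$ inner product and divide, whereas the paper splits the right-hand side with Young's inequality and absorbs terms; both give the same conclusion.
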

\begin{proof}
The problem (\ref{ai}) has a unique weak solution $\phi_\rho \in H_0^1(\Omega_1)$ in the sense that
\begin{align*}
\int_{\Omega_1}a \nabla\phi_\rho\nabla v -\int_{\Omega_1}b\phi_\rho v
=\int_{\Omega_1}(\psi_\rho-\Delta \psi_\rho)v
=\int_{\Omega_1}\psi_\rho v+\int_{\Omega_1} \nabla \psi_\rho \nabla v,
\end{align*}
by inegration by parts for all $v \in H_0^1(\Omega_1)$, choosing $v=\phi_\rho$, we have
\begin{align*}
\underline{a}\int_{\Omega_1}\abs{\nabla \phi_\rho}^2
-\bar b\int_{\Omega_1}\phi_\rho^2
\leq \int_{\Omega_1}\psi_\rho \phi_\rho+\int_{\Omega_1} \nabla \psi_\rho \nabla \phi_\rho
\end{align*}
from the definition of $\mathbb A$, use (\ref{at}b) and Cauchy-Schwarz inequality, we get
\begin{align*}
(\underline{a}-\bar b C_P)\int_{\Omega_1}\abs{\nabla \phi_\rho}^2
\leq& \int_{\Omega_1}\psi_\rho \phi_\rho+\int_{\Omega_1} \nabla \psi_\rho \nabla \phi_\rho\\
\leq&\left(\int_{\Omega_1}\psi_\rho^2\right)^{\frac12} \left(\int_{\Omega_1}\phi_\rho^2\right)^{\frac12}+
\left(\int_{\Omega_1} \abs{\nabla \psi_\rho}^2\right)^{\frac12}
\left(\int_{\Omega_1} \abs{\nabla \phi_\rho}^2\right)^{\frac12}\\
\leq& \frac1{4\varepsilon_3}\int_{\Omega_1}\psi_\rho^2+\varepsilon_3\int_{\Omega_1}\phi_\rho^2+
\frac1{4\varepsilon_4} \int_{\Omega_1} \abs{\nabla \psi_\rho}^2+
\varepsilon_4\int_{\Omega_1} \abs{\nabla \phi_\rho}^2
\end{align*}
with $\varepsilon_3=\frac{\underline{a}-\bar b C_P}{4 C_P}, ~\varepsilon_4=\frac{\underline{a}-\bar b C_P}{4}$, then
\begin{align*}
\int_{\Omega_1} \abs{\nabla \phi_\rho}^2 \leq \frac{2 C_P}{(\underline{a}-\bar b C_P)^2}\int_{\Omega_1}\psi_\rho^2+
\frac{2}{(\underline{a}-\bar b C_P)^2}\int_{\Omega_1} \abs{\nabla \psi_\rho}^2.
\end{align*}

By the definition of $\psi_\rho$, there exist a $C_3$ such that
\begin{align*}
\|\psi_\rho \|^2_{H^1(\Omega_1)}  \leq C_3
\end{align*}
holds for all $\rho \in (0,1)$. Therefore,
\begin{align*}
\int_{\Omega_1} \abs{\nabla \phi_\rho}^2 &\leq
\frac{2 (C_P+1)}{(\underline{a}-\bar b C_P)^2}C_3,\quad
\int_{\Omega_1}  \phi_\rho^2 \leq \frac{2C_P (C_P+1)}{(\underline{a}-\bar b C_P)^2}C_3,
\end{align*}
so there exist a $C_2>0$ such that
\begin{align*}
\max\left\{ \int_{\Omega_1}\phi^2_\rho,  \int_{\Omega_1}\abs{\nabla \phi_\rho}^2\right\} \leq C_2^2,\quad \forall \rho \in (0,1)
\end{align*}
\end{proof}

\begin{thm}\label{td}
If there exist a function $\omega^* \in H^{-1}(\Omega_1)$ such that the source condition
\begin{align}\label{aw}
a^+ - a^*=U'(a^+)^*\omega^*
\end{align}
holds, then
\begin{subequations}\label{as}
\begin{align}
\|a^{\delta}_{\rho}-a^+ \|_{L^2(\Omega)}=O(\sqrt{\delta})~and~
\|U(a^{\delta}_{\rho})-z^\delta \|_{H^1(\Omega_1)}=O(\delta),
\end{align}
\end{subequations}
as $\rho \rightarrow 0$ and $\rho \sim \delta$.
\end{thm}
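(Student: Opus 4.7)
My plan is to follow the classical Tikhonov variational-inequality argument, exploiting (i) the coercivity and nonnegativity of $G_{z^\delta}$ obtained in the proof of Lemma~\ref{lc}, (ii) the second-order Fréchet expansion of Lemma~\ref{lb}, (iii) the source representation (\ref{aw}) together with the duality identities (\ref{au})--(\ref{av}), and (iv) the smoothing device of Lemma~\ref{ld} to handle the non-quadratic remainder. The rate $O(\sqrt{\delta})$ will pop out of a Young-type balance at the end once $\rho\sim\delta$ is chosen.

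First I would use minimality of $a^{\delta}_{\rho}$ with respect to the admissible comparison point $a^+\in\mathbb{A}$, namely
\[
G_{z^\delta}(a^{\delta}_{\rho})+\rho\|a^{\delta}_{\rho}-a^*\|^2_{L^2(\Omega)}\leq G_{z^\delta}(a^+)+\rho\|a^+-a^*\|^2_{L^2(\Omega)}.
\]
Since $U(a^+)=\hat u$ and $\|\hat u-z^\delta\|_{H^1(\Omega_1)}\le\delta$, bounding the three integrals that define $G_{z^\delta}(a^+)$ by using $a^+\le\bar a$, $b\le\bar b$, the trace inequality of Remark~\ref{rem}, and the Friedrichs inequality (\ref{abb}) gives $G_{z^\delta}(a^+)\le C_4\,\delta^2$. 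Expanding $\|a^{\delta}_{\rho}-a^*\|^2_{L^2(\Omega)}=\|a^{\delta}_{\rho}-a^+\|^2_{L^2(\Omega)}+2\langle a^{\delta}_{\rho}-a^+,a^+-a^*\rangle_{L^2(\Omega)}+\|a^+-a^*\|^2_{L^2(\Omega)}$ and cancelling the last term yields
\[
G_{z^\delta}(a^{\delta}_{\rho})+\rho\|a^{\delta}_{\rho}-a^+\|^2_{L^2(\Omega)}\leq C_4\,\delta^2+2\rho\bigl|\langle a^+-a^*,\,a^{\delta}_{\rho}-a^+\rangle_{L^2(\Omega)}\bigr|.
\]

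The heart of the proof is to estimate the cross term. Combining (\ref{au})--(\ref{av}) with the source condition (\ref{aw}), I would rewrite
\[
\langle a^+-a^*,\,a^{\delta}_{\rho}-a^+\rangle_{L^2(\Omega)}=\langle\omega^*,\,U'(a^+)(a^{\delta}_{\rho}-a^+)\rangle_{(H^{-1}(\Omega_1),H^1(\Omega_1))},
\]
and then substitute the Taylor expansion from Lemma~\ref{lb}: $U'(a^+)(a^{\delta}_{\rho}-a^+)=\bigl(U(a^{\delta}_{\rho})-\hat u\bigr)-R$, where $\|R\|_{H^1(\Omega_1)}\le \tfrac{1}{\alpha\beta^2}\|f\|_{L^2(\Omega)}\|a^{\delta}_{\rho}-a^+\|^2_{L^\infty(\Omega)}$. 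The principal part is then dominated by $\|\omega^*\|_{H^{-1}(\Omega_1)}\bigl(\|U(a^{\delta}_{\rho})-z^\delta\|_{H^1(\Omega_1)}+\delta\bigr)$. For the remainder $\langle\omega^*,R\rangle$ the crude Fréchet estimate is too weak because it only controls $\|R\|$ by the $L^\infty$-norm of $a^{\delta}_{\rho}-a^+$; so I would approximate $\omega^*$ in $H^{-1}(\Omega_1)$ by functionals of the form $\psi_\rho-\Delta\psi_\rho$ with $\psi_\rho\in C^\infty_c(\Omega_1)$, use the auxiliary family $\phi_\rho\in H^1_0(\Omega_1)$ of Lemma~\ref{ld} as test function, and rewrite $\langle\psi_\rho-\Delta\psi_\rho,\,R\rangle$ via the PDE that $R$ satisfies ($-\mathrm{div}(a^+\nabla R)-bR=\mathrm{div}\bigl((a^{\delta}_{\rho}-a^+)\nabla(U(a^{\delta}_{\rho})-U(a^+))\bigr)$). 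The uniform bound on $\phi_\rho$ from Lemma~\ref{ld} together with the Hölder inequality converts this into an $L^2$-quadratic bound of the form $C\,\|a^{\delta}_{\rho}-a^+\|^2_{L^2(\Omega)}$ plus an approximation error that vanishes with $\rho$.

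Finally I would invoke the coercivity $G_{z^\delta}(a^{\delta}_{\rho})\ge \tfrac{\alpha}{2}\|U(a^{\delta}_{\rho})-z^\delta\|^2_{H^1(\Omega_1)}$ (implicit in the positivity computation inside the proof of Lemma~\ref{lc}) and apply Young's inequality twice: once on the term $2\rho\|\omega^*\|_{H^{-1}}\|U(a^{\delta}_{\rho})-z^\delta\|_{H^1}$ to split it into $\tfrac{\alpha}{4}\|U(a^{\delta}_{\rho})-z^\delta\|^2_{H^1(\Omega_1)}+C\rho^2$, and once on the $L^2$-quadratic remainder so that a small multiple of $\rho\|a^{\delta}_{\rho}-a^+\|^2_{L^2(\Omega)}$ is absorbed into the left-hand side. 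The remaining inequality reads
\[
\tfrac{\alpha}{4}\|U(a^{\delta}_{\rho})-z^\delta\|^2_{H^1(\Omega_1)}+\tfrac{1}{2}\rho\|a^{\delta}_{\rho}-a^+\|^2_{L^2(\Omega)}\leq C\bigl(\delta^2+\rho\delta+\rho^2\bigr)+o(1),
\]
and selecting $\rho\sim\delta$ yields both estimates in (\ref{as}) simultaneously. The hard part is the preceding paragraph: the second-order Fréchet remainder only carries an $L^\infty$-quadratic bound that cannot be absorbed by the $L^2$-coercive term on the left, and it is precisely the smoothing by the auxiliary pair $(\psi_\rho,\phi_\rho)$ from Lemma~\ref{ld} that produces the $L^2$-quadratic bound needed to close the argument.
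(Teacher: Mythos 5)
Your overall architecture coincides with the paper's: minimality against the comparison point $a^+$, expansion of the penalty to produce the cross term $2\rho\langle a^+-a^*,a^+-a^{\delta}_{\rho}\rangle_{L^2(\Omega)}$, the source condition combined with the duality identities (\ref{au})--(\ref{av}), Riesz representation of $\omega^*$ by some $\omega\in H^1_0(\Omega_1)$, approximation of $\omega$ by $\psi_\rho\in C^\infty_c(\Omega_1)$ with $\|\omega-\psi_\rho\|_{H^1(\Omega_1)}\le\rho$, the auxiliary Dirichlet family $\phi_\rho$ of Lemma \ref{ld}, and a final Young/absorption step with $\rho\sim\delta$. Up to the estimate of the cross term this is exactly the paper's proof.

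The gap is in how you close the nonlinearity. You split $U'(a^+)(a^{\delta}_{\rho}-a^+)=\bigl(U(a^{\delta}_{\rho})-\hat u\bigr)-R$ and claim the smoothing by $(\psi_\rho,\phi_\rho)$ turns $\langle\omega^*,R\rangle$ into a bound $C\|a^{\delta}_{\rho}-a^+\|^2_{L^2(\Omega)}$, a small multiple of which is then absorbed into $\rho\|a^{\delta}_{\rho}-a^+\|^2_{L^2(\Omega)}$ on the left. But the term you must absorb is $2\rho C\|a^{\delta}_{\rho}-a^+\|^2_{L^2(\Omega)}$ with $C$ proportional to $C_2$ (the bound on $\phi_\rho$) and the data, while the left-hand side carries only the factor $\rho$ in front of $\|a^{\delta}_{\rho}-a^+\|^2_{L^2(\Omega)}$; absorption therefore forces $2C<1$, which is precisely the smallness condition on the source element that this theorem is designed to avoid and that the paper explicitly claims to have removed. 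The paper never generates an $L^2$-quadratic term in the coefficient. Instead it tests the $\phi_\rho$-equation with $v=U'(a^+)(a^+-a^{\delta}_{\rho})$, uses the variational identity (\ref{ae}) (boundary terms drop since $\phi_\rho\in H^1_0(\Omega_1)$) to get $\Sigma=-2\rho\int_{\Omega_1}(a^+-a^{\delta}_{\rho})\nabla U(a^+)\nabla\phi_\rho$, and then uses the weak forms of the state equations for $a^+$ and $a^{\delta}_{\rho}$ to rewrite this as $2\rho\int_{\Omega_1}a^{\delta}_{\rho}\nabla\phi_\rho\nabla[U(a^+)-U(a^{\delta}_{\rho})]-2\rho\int_{\Omega_1}b\phi_\rho[U(a^+)-U(a^{\delta}_{\rho})]$; inserting $z^\delta$ yields $O(\rho\delta)+O(\rho^2)$ plus a multiple of $\|U(a^{\delta}_{\rho})-z^\delta\|^2_{H^1(\Omega_1)}$ that is absorbed into the coercive lower bound $G_{z^\delta}(a^{\delta}_{\rho})\ge\frac{\lambda}{2}\|U(a^{\delta}_{\rho})-z^\delta\|^2_{H^1(\Omega_1)}$, not into the penalty term. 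Relatedly, your closing ``$+o(1)$'' is too weak to give the rate: every approximation error must be quantitatively $O(\rho^2)$, which the paper secures via $\Lambda\le\frac{4\bar a}{\alpha\beta}\rho^2\|f\|_{L^2(\Omega)}$, using $\|\omega-\psi_\rho\|_{H^1(\Omega_1)}\le\rho$ together with the uniform bound (\ref{ax}) on $U'(a^+)(a^+-a^{\delta}_{\rho})$.
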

\begin{proof}
From the definition of regularization solution $a^{\delta}_{\rho}$ and $\|\hat{u}-z^\delta \|_{H^1(\Omega_1)}\leq \delta$, we have
\begin{align*}
&G_{z^\delta}(a^{\delta}_{\rho})+\rho \|a^{\delta}_{\rho}-a^* \|^2_{L^2(\Omega)}\\
\leq &G_{z^\delta}(a^+)+\rho \|a^+-a^* \|^2_{L^2(\Omega)}\\
=&\frac12 \int_{\Omega_1} a^+ \abs {\nabla[U(a^+)-z^\delta]}^2
+\frac12\int_{\partial\Omega_1} a^+\gamma [U(a^+)-z^\delta]^2
-\frac12 \int_{\Omega_1} b[U(a^+)-z^\delta]^2\\&+\rho \|a^+-a^* \|^2_{L^2(\Omega)}\\
\leq & \frac{(1+\gamma)\bar a}2{\delta}^2+\rho \|a^+-a^* \|^2_{L^2(\Omega)},
\end{align*}
so
\begin{align*}
&G_{z^\delta}(a^{\delta}_{\rho})+\rho \|a^+-a^{\delta}_{\rho} \|^2_{L^2(\Omega)}\\
\leq& \frac{(1+\gamma)\bar a}2{\delta}^2+\rho \|a^+-a^* \|^2_{L^2(\Omega)}-\rho \|a^{\delta}_{\rho}-a^*\|^2_{L^2(\Omega)}+
\rho \|a^+-a^{\delta}_{\rho} \|^2_{L^2(\Omega)}\\
\leq&\frac{(1+\gamma)\bar a}2 {\delta}^2+2\rho \langle a^+-a^*,a^+ -a^{\delta}_{\rho}\rangle_{L^2(\Omega)},
\end{align*}
from (\ref{au}), (\ref{av}) and (\ref{aw}) we have
\begin{align*}
2\rho \langle a^+-a^*,a^+ -a^{\delta}_{\rho}\rangle_{L^2(\Omega)}
=&2\rho \int_{\Omega} (a^+-a^*)(a^+ -a^{\delta}_{\rho})\\
=&2\rho \langle a^+-a^*,a^+ -a^{\delta}_{\rho} \rangle_{(L^{\infty}(\Omega)^*,L^{\infty}(\Omega))}\\
=&2\rho \langle U'(a^+)^*\omega^*,a^+ -a^{\delta}_{\rho} \rangle_{(L^{\infty}(\Omega)^*,L^{\infty}(\Omega))}\\
=&2\rho \langle \omega^*,U'(a^+)(a^+ -a^{\delta}_{\rho})\rangle _{(H^{-1}(\Omega_1),H^1(\Omega_1))},
\end{align*}
there exist a element $\omega \in H_0^1(\Omega_1)$ such that
\begin{align*}
\langle \omega^*,U'(a^+)(a^+ -a^{\delta}_{\rho})\rangle _{(H^{-1}(\Omega_1),H^1(\Omega_1))}
=\langle \omega,U'(a^+)(a^+ -a^{\delta}_{\rho})\rangle _{H^1(\Omega_1)},
\end{align*}
by the Riesz representation theorem, for fixed $\rho \in (0,1)$, we choose $\psi_\rho \in C^\infty_c(\Omega_1)$ such that
\begin{align*}
\|\omega-\psi_\rho\|_{H^1(\Omega_1)}\leq \rho,
\end{align*}
so
\begin{align*}
&2\rho \langle a^+-a^*,a^+ -a^{\delta}_{\rho}\rangle_{L^2(\Omega)}\\
=&2\rho \langle\psi_\rho,U'(a^+)(a^+ -a^{\delta}_{\rho})\rangle _{H^{1}(\Omega_1)}+ 2\rho \langle\omega-\psi_\rho,U'(a^+)(a^+ -a^{\delta}_{\rho})\rangle _{H^{1}(\Omega_1)}\\
\leq&2\rho \langle\psi_\rho,U'(a^+)(a^+ -a^{\delta}_{\rho})\rangle _{H^{1}(\Omega_1)}+ 2\rho^2\|U'(a^+)(a^+ -a^{\delta}_{\rho})\| _{H^{1}(\Omega_1)}\\
=&\Sigma +\Lambda,
\end{align*}
$a^+$ and $a^{\delta}_{\rho}$ belong to $\mathbb A$ and (\ref{ax}) yield
\begin{align*}
\Lambda
\leq 2\rho^2\cdot\frac1{\alpha\beta} \|f\|_{L^2(\Omega)}\|a^+ -a^{\delta}_{\rho}\|_{L^\infty(\Omega)}
\leq \frac{4 \bar a}{\alpha\beta}\rho^2\|f\|_{L^2(\Omega)}.
\end{align*}

We consider the following Dirichlet problem:
\begin{subequations}\label{ay}
\begin{align}
-\mathrm{div}( a^+ \nabla \phi_\rho)-b \phi_\rho&=\psi_\rho-\Delta \psi_\rho~in ~ \Omega_1 ,\\
\phi_\rho&=0 ~on~ \partial\Omega_1,
\end{align}
\end{subequations}
because $a^+ \in \mathbb A$ and $\psi_\rho \in C^\infty_c(\Omega_1)$, we conclude that (\ref{ay}) has a unique weak solution $\phi_\rho \in H^1_0(\Omega_1)$ such that
\begin{align*}
\int_{\Omega_1}a^+\nabla\phi_\rho\nabla v -\int_{\Omega_1} b\phi_\rho v =\int_{\Omega_1}(\psi_\rho-\Delta \psi_\rho)v,
\end{align*}
for all $v \in H^1(\Omega_1)$.
Assume $v=U'(a^+)(a^+ -a^{\delta}_{\rho})$, then
\begin{align*}
\Sigma & =2\rho \langle\psi_\rho,v\rangle _{H^{1}(\Omega_1)}=2\rho\left(\int_{\Omega_1}\psi_\rho v +\int_{\Omega_1}\nabla\psi_\rho\nabla v \right)\\
&=2\rho\int_{\Omega_1}(\psi_\rho-\Delta\psi_\rho)v=2\rho\left(\int_{\Omega_1}a^+\nabla\phi_\rho\nabla v -\int_{\Omega_1} b\phi_\rho v\right),
\end{align*}
that is
\begin{align*}
\Sigma =2\rho\left[\int_{\Omega_1}a^+\nabla\phi_\rho\nabla U'(a^+)(a^+ -a^{\delta}_{\rho})-\int_{\Omega_1} b\phi_\rho U'(a^+)(a^+ -a^{\delta}_{\rho})\right]
\end{align*}
from (\ref{ae}) we have
\begin{align*}
\Sigma =&-2\rho\int_{\Omega_1}(a^+ -a^{\delta}_{\rho})\nabla U(a^+)\nabla \phi_\rho\\
=&-2\rho\int_{\Omega_1}a^+\nabla U(a^+)\nabla\phi_\rho+
2\rho\int_{\Omega_1}b U(a^+) \phi_\rho\\
&+2\rho\int_{\Omega_1}a^{\delta}_{\rho}\nabla U(a^+)\nabla \phi_\rho
-2\rho\int_{\Omega_1}b U(a^+) \phi_\rho\\
=&2\rho\int_{\Omega_1}a^{\delta}_{\rho}\nabla \phi_\rho\nabla \left[U(a^+)-U(a^{\delta}_{\rho}) \right]
-2\rho\int_{\Omega_1}b  \phi_\rho \left[U(a^+)-U(a^{\delta}_{\rho}) \right]\\
=&2\rho\int_{\Omega_1}a^{\delta}_{\rho}\nabla \phi_\rho\nabla \left[ U(a^+)-z^{\delta} \right]+2\rho\int_{\Omega_1}a^{\delta}_{\rho}\nabla \phi_\rho\nabla \left[z^{\delta}-U(a^{\delta}_{\rho}) \right]\\
&-2\rho\int_{\Omega_1}b \phi_\rho
\left[U(a^+)-z^{\delta}\right]
-2\rho \int_{\Omega_1}b \phi_\rho \left[z^{\delta}-U(a^{\delta}_{\rho})\right],
\end{align*}
where the third equals sign comes from
\begin{align*}
\int_{\Omega_1}fv
=\int_{\Omega_1} a^+ \nabla U(a^+) \nabla v -\int_{\Omega_1} b U(a^+)v
=\int_{\Omega_1} a^{\delta}_{\rho} \nabla U(a^{\delta}_{\rho}) \nabla v -\int_{\Omega_1}bU(a^{\delta}_{\rho})v,
\end{align*}
using the Cauchy-Schwarz inequation and $U(a^+)=\hat{u}$, we have
\begin{align*}
\Sigma
\leq&2\bar a\rho \left[\int_{\Omega_1} \abs{\nabla U(a^+)-\nabla z^{\delta} }^2\right]^{\frac12}
\left[\int_{\Omega_1}\abs{\nabla\phi_\rho}^2\right]^{\frac12}\\
& +2 \bar a \rho\left[\int_{\Omega_1} \abs{\nabla z^{\delta}- \nabla U(a^{\delta}_{\rho})}^2\right]^{\frac12} \left[\int_{\Omega_1}\abs{\nabla \phi_\rho}^2\right]^{\frac12}\\
&+2\bar b \rho\left\{\int_{\Omega_1}\left[U(a^+)-z^{\delta}\right]^2 \right\}^{\frac12}\left[\int_{\Omega_1}\phi_\rho^2\right]^{\frac12}
+2\bar b\rho \left\{\int_{\Omega_1}\left[z^{\delta}-U(a^{\delta}_{\rho}] \right)^2 \right\}^{\frac12}\left[\int_{\Omega_1} \phi_\rho^2\right]^{\frac12}\\
\leq&2\bar a \rho\delta\left[\int_{\Omega_1}\abs{\nabla \phi_\rho}^2\right]^{\frac12}+
\varepsilon_5\int_{\Omega_1}\abs{\nabla U(a^{\delta}_{\rho}) -\nabla z^{\delta}}^2+\frac{{\bar a}^2}{\varepsilon_5}\rho^2\int_{\Omega_1} \abs{\nabla\phi_\rho}^2\\
&+2\bar b\rho \delta\left[\int_{\Omega_1}\phi_\rho^2\right]^{\frac12}
+\varepsilon_5\int_{\Omega_1}\left[z^{\delta}-U(a^{\delta}_{\rho}) \right]^2+\frac{{\bar b}^2}{\varepsilon_5}\rho^2\int_{\Omega_1}\phi_\rho^2,
\end{align*}
$\|\omega-\psi_\rho\|_{H^1(\Omega_1)} \leq \rho$ implies $(\psi_\rho)_{\rho \in (0,1)}$ is bounded in the $H^1(\Omega_1)$-norm. Using Lemma 3.1, we have
\begin{align*}
\Sigma \leq&2 C_2\bar a  \rho  \delta+2 C_2\bar b \rho  \delta+\frac{ C_2^2 {\bar a}^2}{\varepsilon_5} \rho^2 +\frac{ C_2^2{\bar b}^2}{\varepsilon_5} \rho^2 +\varepsilon_5 \|U(a^{\delta}_{\rho})-z^\delta\|^2_{H^1(\Omega_1)},
\end{align*}
from (\ref{abb}) we obtain
\begin{align*}
G_{z^\delta}(a^{\delta}_{\rho}) 
\geq\frac{\underline a-\bar bC_F}{2} \|\nabla U(a^{\delta}_{\rho})-\nabla z^\delta\|^2_{L^2(\Omega_1)}
\geq\frac{\lambda}{2} \| U(a^{\delta}_{\rho})- z^\delta\|^2_{H^1(\Omega_1)},
\end{align*}
where $\lambda=\frac{\underline a-\bar bC_F}{C_F}$, so we have
\begin{align*}
&\frac{\lambda}{2} \|U(a^{\delta}_{\rho})- z^\delta\|^2_{H^1(\Omega_1)}+\rho\|a^+-a^{\delta}_{\rho} \|^2_{L^2(\Omega)}\\
\leq&G_{z^\delta}(a^{\delta}_{\rho})+\rho\|a^+-a^{\delta}_{\rho} \|^2_{L^2(\Omega)}\\
\leq& \frac{(1+\gamma)\bar a}2\delta^2+\frac{4 \bar a}{\alpha\beta}\rho^2\|f\|_{L^2(\Omega)}
+2 C_2(\bar a+\bar b)  \rho \delta
+\frac{C_2^2({\bar a}^2+ {\bar b}^2)}{\varepsilon_5} \rho^2+\varepsilon_5 \|U(a^{\delta}_{\rho})-z^\delta\|^2_{H^1(\Omega_1)},
\end{align*}
with $\varepsilon_5=\frac{\lambda}{4}$ we obtain
\begin{align*}
&\frac{\lambda}{4} \|U(a^{\delta}_{\rho})-z^\delta\| ^2_{H^1(\Omega_1)}+ \rho\|a^+-a^{\delta}_{\rho} \|^2_{L^2(\Omega)}\\
\leq&\frac{(1+\gamma)\bar a}2\delta^2+\frac{4 \bar a \|f\|_{L^2(\Omega)}}{\alpha\beta}\rho^2
+2 C_2(\bar a+\bar b)  \rho \delta
+\frac{4 C_2^2({\bar a}^2+ {\bar b}^2)}{\alpha} \rho^2\\
=&O(\delta^2),
\end{align*}
as $\delta\rightarrow 0$ and $\rho \sim \delta$, that is $$\|U(a^{\delta}_{\rho})-z^\delta\| ^2_{H^1(\Omega_1)}=O(\delta),~~~ \|a^+-a^{\delta}_{\rho} \|^2_{L^2(\Omega)}=O(\sqrt{\delta}).$$

Because $\|U(a^+)-z^\delta\|_{H^1(\Omega_1)} \leq \delta$, we have
\begin{align*}
 \|U(a^{\delta}_{\rho})-U(a^+)\|_{H^1(\Omega_1)}
 \leq
 \|U(a^{\delta}_{\rho})-z^\delta\|_{H^1(\Omega_1)}
 +\|z^\delta-U(a^+)\|_{H^1(\Omega_1)}= O(\delta).
\end{align*}
\end{proof}
\section{the discussion of condition}
Now we discuss the source condition (\ref{aw}). This is a quite weak source condition and it especially does not require any smoothness of $a^+$. 
We note that the source condition (\ref{aw}) is fulfilled if there exists a function $\omega \in H^1_0(\Omega_1)$ such that
\begin{align*}
\langle a^+-a^*,h\rangle_{L^2(\Omega)}=
\langle\omega,U'(a^+)h\rangle_{H^1(\Omega_1)},
\end{align*}
for all $h \in L^{\infty}(\Omega)$.

We see that this weaker condition is satisfied under a quite weak hypothesis about the regularity of the sought coefficient.
However, as we can see in the proof of theorem 3.1 and to
obtaining the convergence rate $O(\sqrt{\delta})$, we only need the following weaker condition: there exists a function $\omega \in H^1_0(\Omega_1)$ such that
\begin{align}\label{az}
\langle a^+-a^*,a^+-a^{\delta}_{\rho}\rangle_{L^2(\Omega)}= \langle\omega,U'(a^+)(a^+-a^{\delta}_{\rho})\rangle_{H^1(\Omega_1)},
\end{align}
for all $a^{\delta}_{\rho}$ is the regularization solution of (\ref{ad}).

To further analyze the source condition we assume that the sought coefficient is known on the boundary $\partial \Omega$ of the domain $\Omega$. Therefore, the admissible set of coefficients is restricted to
\begin{align*}
\mathbb A=\{ a \in L^\infty (\Omega)| 0<\underline{a}<a(x)<\overline{a}~~on ~~\Omega ~~and ~~a(x)=m(x)~~on ~~\partial \Omega_1\},
\end{align*}
where $m \in C(\partial\Omega_1)$ is given. The regular assumption of the coefficients in $H^1 (\Omega)$ is introduced to guarantee that there exists their trace on the boundary $\partial\Omega$.

In the following, as $a^*$ is only a priori estimate of $a^+$, for simplicity, we assume that $a^* \in H^1 (\Omega)$.
\begin{thm}\label{te}
Assume $a^+ \in W^{1,\infty}(\Omega)$ and $\partial \Omega$ is piecewise smooth, suppose $\hat{u}\in C^2( \overline{\Omega_1})$ and $\abs{\nabla \hat{u}}\neq 0$, where $\hat{u}=U(a^+)$. then condition (\ref{az}) is fulfilled and the Tikhonov regularization convergence rate $O(\surd{\delta})$ is obtained.
\end{thm}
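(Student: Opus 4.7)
Proof plan. The plan is to exhibit an explicit $\omega\in H^1_0(\Omega_1)$ verifying the weaker source condition (\ref{az}); Theorem~\ref{td} then delivers the $O(\sqrt{\delta})$ convergence rate without further work. The construction exploits the non-degeneracy $|\nabla\hat{u}|\neq 0$ to invert $U'(a^+)$ along characteristics of $\hat{u}$.

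\textbf{Step 1 (Using the boundary constraint).} By the restricted admissible set of Section 4, both $a^+$ and $a^{\delta}_{\rho}$ equal $m$ on $\partial\Omega_1$, so $h:=a^+-a^{\delta}_{\rho}$ vanishes on $\partial\Omega_1$. Hence $\eta:=U'(a^+)h$ obeys the reduced weak form
\begin{align*}
\int_{\Omega_1}a^+\nabla\eta\cdot\nabla v+\int_{\partial\Omega_1}a^+\gamma\eta v-\int_{\Omega_1}b\eta v=-\int_{\Omega_1}h\,\nabla\hat{u}\cdot\nabla v,\qquad \forall\,v\in H^1(\Omega_1),
\end{align*}
since the boundary integral $\int_{\partial\Omega_1}h\gamma\hat{u}v$ in (\ref{ag}) drops out. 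Because the state equation is insensitive to values of $a$ on $\Omega\setminus\Omega_1$, one may normalize $a^*=a^+$ outside $\Omega_1$ and reduce $\int_{\Omega}(a^+-a^*)h$ to $\int_{\Omega_1}(a^+-a^*)h$.

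\textbf{Step 2 (Transport auxiliary).} Since $\hat{u}\in C^2(\overline{\Omega_1})$ with $|\nabla\hat{u}|$ bounded below on the piecewise-smooth domain $\overline{\Omega_1}$, the integral curves of $\nabla\hat{u}$ foliate $\overline{\Omega_1}$ and $\partial\Omega_1$ splits (up to a negligible tangential set) into an inflow portion $\Gamma_-$ and an outflow portion $\Gamma_+$. Using $a^+-a^*\in W^{1,\infty}(\Omega)$, I solve the Cauchy problem
\begin{align*}
\nabla\hat{u}\cdot\nabla Q=a^+-a^*\text{ in }\Omega_1,\qquad Q=0\text{ on }\Gamma_-,
\end{align*}
by integration along characteristics. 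Standard regularity for first-order transport equations with Lipschitz coefficients gives $Q\in H^1(\Omega_1)$.

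\textbf{Step 3 (Passing to an $H^1_0$ representer).} The transport solution $Q$ typically fails to vanish on $\Gamma_+$, so I define $\omega\in H^1_0(\Omega_1)$ as the unique weak solution of a carefully chosen Dirichlet problem with source built from $Q$, $a^+-a^*$, and the coefficients $a^+,b,\gamma$, so that substituting $v=Q$ into the reduced weak form of Step 1 and combining with the Riesz-type identity for $\omega$ yields
\begin{align*}
\int_{\Omega_1}(a^+-a^*)\,h=\int_{\Omega_1}h\,\nabla\hat{u}\cdot\nabla Q=\int_{\Omega_1}\omega\eta+\int_{\Omega_1}\nabla\omega\cdot\nabla\eta=\langle\omega,\eta\rangle_{H^1(\Omega_1)},
\end{align*}
which is precisely (\ref{az}). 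Invoking Theorem~\ref{td} then yields $\|a^{\delta}_{\rho}-a^+\|_{L^2(\Omega)}=O(\sqrt{\delta})$.

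\textbf{Main obstacle.} Step 3 is the decisive point. The elliptic correction must simultaneously force $\omega=0$ on all of $\partial\Omega_1$, preserve the interior transport identity $\nabla\hat{u}\cdot\nabla Q=a^+-a^*$, and produce boundary contributions (after integration by parts) that exactly cancel the Robin boundary term $\int_{\partial\Omega_1}a^+\gamma\eta\omega$ inherited from the weak form of $\eta$. The vanishing $h|_{\partial\Omega_1}=0$, which comes from the admissibility constraint $a=m$ on $\partial\Omega_1$, is what makes this triple balance feasible; without it the boundary mismatch would obstruct the passage from the transport solution $Q$ to an $H^1_0$ representer $\omega$.
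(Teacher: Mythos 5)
Your overall route coincides with the paper's: use $\abs{\nabla \hat{u}}\neq 0$ to solve the transport equation $\nabla\hat{u}\cdot\nabla v=\pm(a^+-a^*)$ (this is exactly the content of Lemma \ref{llm}, which the paper simply cites from Kohn--Lowe rather than re-deriving by characteristics as your Step 2 does), and then feed $v$ into the weak formulation of $\eta=U'(a^+)(a^+-a^{\delta}_{\rho})$ to verify (\ref{az}) and invoke Theorem \ref{td}. Your Step 1, using $a^+=a^{\delta}_{\rho}=m$ on $\partial\Omega_1$ to kill the boundary term $\int_{\partial\Omega_1}h\gamma\hat{u}v$ in (\ref{ag}), is also what the paper's Lemma \ref{lle} implicitly relies on. So the skeleton is right.

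The genuine gap is Step 3, which you yourself flag as the ``main obstacle'' and then do not carry out: the ``carefully chosen Dirichlet problem'' producing $\omega$ is never written down, and the triple cancellation you describe is asserted, not proved. In fact no auxiliary elliptic problem is needed. With $h:=a^+-a^{\delta}_{\rho}$ vanishing on $\partial\Omega_1$ and $Q$ solving $\nabla\hat{u}\cdot\nabla Q=a^+-a^*$, testing the weak form of $\eta$ against $v=Q$ gives
\begin{align*}
\langle a^+-a^*,\,a^+-a^{\delta}_{\rho}\rangle_{L^2(\Omega)}
=\int_{\Omega_1}h\,\nabla\hat{u}\cdot\nabla Q
=-\Bigl(\int_{\Omega_1}a^+\nabla\eta\cdot\nabla Q+\int_{\partial\Omega_1}a^+\gamma\,\eta\,Q-\int_{\Omega_1}b\,\eta\,Q\Bigr),
\end{align*}
and the right-hand side is a bounded linear functional of $\eta$ in the $H^1(\Omega_1)$-norm; the Riesz representation theorem then supplies $\omega$ with $\langle\omega,\xi\rangle_{H^1(\Omega_1)}$ equal to that functional for every $\xi$, which is precisely (\ref{az}). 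This is the paper's Lemma \ref{lle}, and it finishes the proof without any of the boundary-matching machinery you anticipate. One caveat that affects both your sketch and the paper equally: the Riesz representative lives in $H^1(\Omega_1)$, not automatically in $H^1_0(\Omega_1)$, whereas the proof of Theorem \ref{td} approximates $\omega$ by $\psi_\rho\in C^\infty_c(\Omega_1)$ in the $H^1$-norm, which uses density in $H^1_0(\Omega_1)$; forcing $\omega$ to vanish on $\partial\Omega_1$ is the one point your Step 3 correctly identifies as delicate, and neither your proposal nor the paper actually settles it.
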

To prove theorem 3.1 we need the following auxiliary results.
\begin{lem}\label{lle}
Let $a^+$ belong to $W^{1,\infty}(\Omega)$ and the boundary is piecewise smooth, assume that there exists a function $v \in H^1(\Omega_1)$ such that
\begin{align}\label{aaa}
-\nabla U(a^+)\nabla v=a^+-a^*,
\end{align}
on $\Omega$. Then, condition (\ref{az}) is fulfilled.
\end{lem}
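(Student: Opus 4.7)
The plan is to exhibit an $\omega\in H^1_0(\Omega_1)$ witnessing (\ref{az}) by combining the pointwise hypothesis (\ref{aaa}) with the variational characterisation of $U'(a^+)$ from Lemma \ref{lb}, and then rephrasing the resulting bilinear form as an $H^1(\Omega_1)$ inner product via an auxiliary elliptic problem for $\omega$.

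First I set $h:=a^+-a^\delta_\rho\in L^\infty(\Omega)$. Because the admissible set here is restricted so that every $a\in\mathbb A$ agrees with the prescribed function $m$ on $\partial\Omega_1$, we have $h|_{\partial\Omega_1}=0$. Pairing (\ref{aaa}) with $h$ and integrating over $\Omega_1$ gives
\begin{align*}
\langle a^+-a^*,\,a^+-a^\delta_\rho\rangle_{L^2(\Omega)} = -\int_{\Omega_1} h\,\nabla U(a^+)\cdot\nabla v.
\end{align*}
With $w:=U'(a^+)h$, testing the variational equation (\ref{ae}) against $v\in H^1(\Omega_1)$ and observing that the boundary contribution $\int_{\partial\Omega_1}h\gamma U(a^+)v$ vanishes since $h$ does, converts the right-hand side into
\begin{align*}
\int_{\Omega_1} a^+\nabla v\cdot\nabla w + \int_{\partial\Omega_1} a^+\gamma vw - \int_{\Omega_1} bvw.
\end{align*}

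Next I take $\omega$ to be the unique weak solution in $H^1_0(\Omega_1)$ of the auxiliary Dirichlet problem
\begin{align*}
-\Delta\omega+\omega = -\mathrm{div}(a^+\nabla v)-bv \quad\text{in }\Omega_1, \qquad \omega = 0 \quad\text{on }\partial\Omega_1,
\end{align*}
which is well posed because $a^+\in W^{1,\infty}(\Omega)$ and piecewise smoothness of $\partial\Omega_1$ put the right-hand side in $H^{-1}(\Omega_1)$. Integrating the identity $\langle\omega,w\rangle_{H^1(\Omega_1)}=\int_{\Omega_1}(-\Delta\omega+\omega)w + \int_{\partial\Omega_1}w\,\nabla\omega\cdot\vec n$ by parts against this PDE reproduces the volume contribution obtained above, while the Robin condition on $w$ (namely $\gamma w=-\nabla w\cdot\vec n$ on $\partial\Omega_1$) is used to cancel the remaining surface term against $\int_{\partial\Omega_1}a^+\gamma vw$.

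The main obstacle is precisely this matching of boundary data: a Robin-type bilinear contribution $\int_{\partial\Omega_1}a^+\gamma vw$ does not come for free from a pure Dirichlet $\omega\in H^1_0(\Omega_1)$, so the integration-by-parts bookkeeping on $\partial\Omega_1$ must be carried out with care, exploiting both the Robin trace of $w$ and an appropriate compatibility for $v$ that is guaranteed by the assumption $|\nabla U(a^+)|\neq 0$ in Theorem \ref{te}. The $W^{1,\infty}$ regularity of $a^+$ and the piecewise smoothness of $\partial\Omega_1$ are exactly what make these surface manipulations admissible. Once the cancellation is verified, chaining the three displayed identities yields (\ref{az}) with the $\omega$ constructed above, and together with Theorem \ref{td} this supplies Theorem \ref{te} with its convergence rate $O(\sqrt\delta)$.
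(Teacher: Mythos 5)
Your first two steps coincide with the paper's: you pair (\ref{aaa}) with $h=a^+-a^{\delta}_{\rho}$, use the fact that the restricted admissible set forces $h=0$ on $\partial\Omega_1$ to kill the surface terms, and land on the identity
$\langle a^+-a^*,h\rangle_{L^2(\Omega)}=\int_{\Omega_1}a^+\nabla v\cdot\nabla w+\int_{\partial\Omega_1}a^+\gamma v\,w-\int_{\Omega_1}b\,v\,w$
with $w=U'(a^+)h$. Up to there the argument is sound and matches the paper (which reaches the same identity from the strong form (\ref{bb}) rather than directly from (\ref{ae})).

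The gap is in your construction of $\omega$. If $\omega\in H^1_0(\Omega_1)$ solves $-\Delta\omega+\omega=-\mathrm{div}(a^+\nabla v)-bv$ with Dirichlet data, then integrating by parts gives
\begin{align*}
\langle\omega,w\rangle_{H^1(\Omega_1)}
=\int_{\Omega_1}a^+\nabla v\cdot\nabla w-\int_{\Omega_1}b\,v\,w
+\int_{\partial\Omega_1}\bigl[(\nabla\omega\cdot\vec n)-(a^+\nabla v\cdot\vec n)\bigr]w,
\end{align*}
so to recover the target you would need $\nabla\omega\cdot\vec n-a^+\nabla v\cdot\vec n=a^+\gamma v$ on $\partial\Omega_1$. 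You cannot prescribe the normal derivative of a Dirichlet solution on top of its boundary values, and the Robin condition on $w$ is of no help here: the leftover surface integrals involve $w$ multiplied by $\nabla\omega\cdot\vec n$ and $\nabla v\cdot\vec n$, never $\nabla w\cdot\vec n$, so the relation $\nabla w\cdot\vec n+\gamma w=0$ cannot cancel anything. The appeal to $|\nabla U(a^+)|\neq 0$ is also misplaced; in Theorem \ref{te} that hypothesis only feeds Lemma \ref{llm} to guarantee the \emph{existence} of $v$, and carries no boundary compatibility for $v$. The paper avoids this entire issue: it observes (via the trace estimate of Remark \ref{rem}) that $\xi\mapsto\int_{\Omega_1}a^+\nabla v\cdot\nabla\xi+\int_{\partial\Omega_1}a^+\gamma v\,\xi-\int_{\Omega_1}b\,v\,\xi$ is a bounded linear functional on the Hilbert space $H^1(\Omega_1)$ and invokes the Riesz representation theorem to produce $\omega$ directly, with no auxiliary PDE and no boundary matching to perform. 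Replacing your third step by that Riesz argument closes the gap.
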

\begin{proof}
It follows from (\ref{aaa}) that
\begin{align}\label{aab}
\langle a^+-a^*,a^+-a^{\delta}_{\rho}\rangle_{L^2(\Omega)}= & \langle-\nabla U(a^+)\nabla v, a^+-a^{\delta}_{\rho}\rangle_{L^2(\Omega_1)}\nonumber\\
=&-\int_{\Omega_1}(a^+-a^{\delta}_{\rho}) \nabla U(a^+)\nabla v,
\end{align}

So we notice the boundary is piecewise smooth and the solution of $U(a^+)$ and unique solution of the following Dirichlet problem $\eta=U'(a^+)(a^+-a^{\delta}_{\rho}) \in H^2(\Omega_1) \cap H_0^1(\Omega_1)$
\begin{subequations}\label{bb}
\begin{align}
-\mathrm{div}(a^+ \nabla \eta)-b\eta&=\mathrm{div}((a^+-a^{\delta}_{\rho}) \nabla U(a^+))~in ~ \Omega_1 ,\\
\nabla \eta \cdot \vec{n}+\gamma\eta &=0 ~on~ \partial\Omega_1.
\end{align}
\end{subequations}

Both sides of (\ref{bb}a) multiply by $v \in H^1(\Omega_1)$ and part integral, we obtain
\begin{align}\label{be}
&\int_{\Omega_1}a^+\nabla\eta\nabla v+\int_{\partial\Omega_1}a^+\nabla \eta \vec{n}v-\int_{\Omega_1} b \eta v\nonumber\\
=&-\int_{\Omega_1}(a^+-a^{\delta}_{\rho}) \nabla U(a^+)\nabla v-\int_{\partial\Omega_1}(a^+-a^{\delta}_{\rho}) \nabla U(a^+) \vec{n}v\nonumber\\
=&\langle a^+-a^*,a^+-a^{\delta}_{\rho}\rangle_{L^2(\Omega)},
\end{align}
by $\nabla \eta \cdot \vec{n}+\gamma\eta =0$ on $\partial \Omega_1$, we have
\begin{align*}
\langle a^+-a^*,a^+-a^{\delta}_{\rho}\rangle_{L^2(\Omega)}
=&\int_{\Omega_1}a^+\nabla [U'(a^+)(a^+-a^{\delta}_{\rho})]\nabla v-\int_{\Omega_1} b U'(a^+)(a^+-a^{\delta}_{\rho})v\\
&+\int_{\partial\Omega_1}a^+ \gamma U'(a^+)(a^+-a^{\delta}_{\rho}) v,
\end{align*}
the linear function $\Gamma: H^1(\Omega_1)\rightarrow \mathbb R$ defined by
\begin{align*}
\Gamma(\xi)=\int_{\Omega_1}a^+ \nabla\xi\nabla v+\int_{\partial\Omega_1}a^+\gamma \xi v-\int_{\Omega_1}b \xi v,
\end{align*}
is bounded in $H^1(\Omega_1)$ respect $H^1(\Omega_1)$-norm. because
\begin{align*}
|\Gamma(\xi)|
\leq &\|a^+\|_{L^{\infty}(\Omega_1)}
\|\xi\|_{H^1(\Omega_1)}\|v\|_{H^1(\Omega_1)}
+\gamma\|a^+\|_{L^{\infty}(\Omega_1)}\|\xi\|_{L^2(\partial\Omega_1)}\|v\|_{L^2(\partial\Omega_1)}\\
&+\|b\|_{L^{\infty}(\Omega_1)}\|\xi\|_{L^2(\Omega_1)}\|v\|_{L^2(\Omega_1)}\\
\leq& \bar a\|\xi\|_{H^1(\Omega_1)}\|v\|_{H^1(\Omega_1)}
+C_t^2\bar a\gamma\|\xi\|_{H^1(\Omega_1)}\|v\|_{H^1(\Omega_1)}\\
&+\bar b\|\xi\|_{H^1(\Omega_1)}\|v\|_{H^1(\Omega_1)}\\
=&(\bar a+C_t^2\bar a\gamma+\bar b)\|\xi\|_{H^1(\Omega_1)}\|v\|_{H^1(\Omega_1)}.
\end{align*}

Then we use Riesz representation theorem, there exist a ${\omega} \in H^1_0(\Omega_1)$ such that
\begin{align*}
\Gamma(\xi)=\langle \omega, \xi\rangle_{H^1(\Omega_1)},
\end{align*}
for all $\xi \in H^1(\Omega_1)$, that is
\begin{align*}
\langle a^+-a^*,a^+-a^{\delta}_{\rho}\rangle_{L^2(\Omega)}=
\Gamma[U'(a^+)(a^+-a^{\delta}_{\rho})]=\langle \omega, U'(a^+)(a^+-a^{\delta}_{\rho})\rangle_{H^1(\Omega_1)}.
\end{align*}
\end{proof}

Notice that the requirement $a^+ \in W^{1,\infty}(\Omega)$ ensure the solution $ \hat{u}=U(a^+) \in H^1(\Omega_1)$, the existence of function $v \in H^1(\Omega_1)$ in Theorem 4.1 is guaranteed by the following result (see Ref.\cite{CFM12}).

\begin{lem}\label{llm}
Assume $\hat{u} \in C^2(\overline{\Omega})$ and $\abs{\nabla \hat{u}}\neq 0$ on $\overline{{\Omega}}$, then for any element $\tilde{a} \in H^1(\Omega)$, there exist $v \in H^1(\Omega)$, satisfing the equation $\nabla \hat{u} \nabla v =\widetilde{a}$ and the estimation $\|v\|_{H^1(\Omega_1)} \leq C\|\tilde{a}\|_{H^1(\Omega)}$, where the constant $C$ is independent of  $\tilde{a}$.
\end{lem}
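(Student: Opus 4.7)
\textbf{Proof proposal for Lemma \ref{llm}.}
The plan is to solve the transport equation $\nabla \hat u \cdot \nabla v = \tilde a$ by introducing coordinates adapted to the gradient field of $\hat u$, i.e.\ by integrating along characteristics. The two hypotheses $\hat u \in C^2(\overline\Omega)$ and $|\nabla \hat u|\neq 0$ on $\overline\Omega$ are precisely what makes this flow non-degenerate and of class $C^1$ with uniform bounds.

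First, I would extract a uniform lower bound. Since $\nabla \hat u$ is continuous and nowhere zero on the compact set $\overline\Omega$, there exists $c_0>0$ with $|\nabla \hat u(x)|\ge c_0$ for every $x\in\overline\Omega$. Consider the vector field $N := \nabla \hat u/|\nabla \hat u|^2$; it is of class $C^1$ with bounded derivatives and satisfies $N\cdot \nabla \hat u \equiv 1$. Denote by $\Phi_t$ the flow of $N$. Along any integral curve, $\hat u(\Phi_t(x))=\hat u(x)+t$, so $\hat u$ serves as a global canonical parameter on the flow, and the desired equation becomes the ODE $\tfrac{d}{dt}v(\Phi_t(x))=\tilde a(\Phi_t(x))$.

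Second, I would pick a regular level hypersurface $\Sigma_0 := \{\hat u = c\}$ for some $c$ between $\min_{\overline\Omega}\hat u$ and $\max_{\overline\Omega}\hat u$, and use the map $(s,y')\mapsto \Phi_{s-c}(y')$ to construct a $C^1$-diffeomorphism $\Psi$ between a strip $(s_-,s_+)\times\Sigma_0$ and (a neighbourhood covering) $\Omega$. Writing $\tilde A(s,y') := \tilde a(\Psi(s,y'))$, define
\begin{equation*}
v(\Psi(s,y')) \; := \; \int_{c}^{s} \tilde A(\tau,y')\,d\tau .
\end{equation*}
A direct chain-rule computation shows $\nabla \hat u\cdot\nabla v = \partial_s v\circ\Psi^{-1} = \tilde a$ on $\Omega$. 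If the full domain is not covered by a single such strip, a standard partition-of-unity argument using a finite cover of $\overline\Omega$ by tubular neighbourhoods of level sets produces a global $v$, with the ambiguities absorbed into arbitrary additive constants on each characteristic.

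Third, for the norm estimate, both $\Psi$ and $\Psi^{-1}$ have derivatives bounded by constants depending only on $\|\hat u\|_{C^2(\overline\Omega)}$ and $c_0$, so $H^1$ norms under pullback are comparable up to such constants. In the straightened coordinates, $\partial_s v = \tilde A$ immediately gives $\|\partial_s v\|_{L^2}\lesssim\|\tilde a\|_{L^2(\Omega)}$, while differentiating under the integral sign and applying Minkowski's inequality in integral form yields
\begin{equation*}
\bigl\|\partial_{y'} v\bigr\|_{L^2}
\;\le\; (s_+-s_-)^{1/2}\,\bigl\|\nabla \tilde A\bigr\|_{L^2}
\;\lesssim\; \|\tilde a\|_{H^1(\Omega)}.
\end{equation*}
An analogous bound for $\|v\|_{L^2}$ closes the estimate $\|v\|_{H^1(\Omega_1)}\le C\|\tilde a\|_{H^1(\Omega)}$ with $C$ independent of $\tilde a$.

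The main obstacle I expect is the global geometry: the flow of $N$ may exit $\overline\Omega$ before reaching a chosen base level set, so a single global straightening need not exist. Handling this via a finite partition of unity is routine, but one must be careful that the constants arising from the transition maps depend only on $\|\hat u\|_{C^2(\overline\Omega)}$ and $c_0$, and not on $\tilde a$, in order to keep $C$ uniform. Apart from this geometric bookkeeping, the construction and estimate are essentially the one-dimensional fundamental theorem of calculus transported by a $C^1$ diffeomorphism.
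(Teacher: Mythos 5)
The paper does not actually prove this lemma: it is quoted verbatim from Kohn and Lowe \cite{CFM12}, with the proof deferred to that reference. So there is no in-paper argument to compare against; your characteristics construction is, in spirit, the natural route and close to what the cited source does. However, as written your proposal has a genuine gap exactly at the point you label ``routine geometric bookkeeping.'' For a first-order equation solved exactly, a partition of unity does not glue local solutions: if $v=\sum_i\chi_i v_i$ with $\nabla\hat u\cdot\nabla v_i=\tilde a$ on each patch, then
\begin{equation*}
\nabla\hat u\cdot\nabla v=\tilde a+\sum_i v_i\,\nabla\hat u\cdot\nabla\chi_i,
\end{equation*}
and the residual term vanishes only if the $v_i$ coincide on overlaps. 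Since two local solutions differ by a function constant along each characteristic, coincidence requires a globally consistent normalization of the integration constant on every characteristic, i.e.\ a global transversal meeting each characteristic exactly once. A single level set $\{\hat u=c\}$ does not serve: restricted to $\Omega$ it can be disconnected, it need not meet every characteristic (characteristics can enter and leave through $\partial\Omega$ without ever reaching the level $c$), and so the strip $(s_-,s_+)\times\Sigma_0$ generally does not cover $\Omega$.

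The second, related difficulty is the uniformity of the constant. The natural global definition is $v(x)=\int_{t_-(x)}^{0}\tilde a(\Phi_\tau(x))\,d\tau$, where $t_-(x)$ is the time at which the characteristic through $x$ enters $\overline\Omega$; but $t_-$ is discontinuous wherever characteristics are tangent to $\partial\Omega$, the flow-box charts degenerate there, and this is precisely where $v\in H^1$ and the bound $\|v\|_{H^1}\le C\|\tilde a\|_{H^1(\Omega)}$ are at risk. Your norm estimates inside a single straightened strip are fine (the Cauchy--Schwarz step giving $\|\partial_{y'}v\|_{L^2}\lesssim\|\nabla\tilde a\|_{L^2}$ is correct up to Jacobian factors controlled by $\|\hat u\|_{C^2}$ and $c_0$), but they do not survive the gluing as described. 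To close the argument you would need either to extend $\hat u$ and $\tilde a$ to a larger domain on which a genuine global transversal exists, or to reproduce the coarea-type construction of \cite{CFM12}; simply invoking a partition of unity is not enough.
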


Then Theorem 4.1 directly follows from Lemma 4.1 and Lemma 4.2.


\begin{thebibliography}{99}
\bibitem{CFM01}Z. Wu, J. Yin and C. Wang, Elliptic \& Parabolic Equations, \emph{World Scientific} (2006).

\bibitem{CFM03}G. Alessandrini, An identification problem for an elliptic equation in two variables, \emph{Ann. Mat. Pura. Appl.} \textbf{145(1)} (1986) 265-296.

\bibitem{CFM04}J. Zou, Numerical methods for elliptic inverse problem, \emph{Int. J. Comput. Math.} \textbf{70(2)} (1998) 211-232.

\bibitem{CFM05}K. Ito and K. Kunisch, On the injectivity and linearization of the coefficient-to-solution mapping for elliptic boundary value problem, \emph{J. Math. Anal. Appl.} \textbf{188(3)} (1994) 1040-1066.

\bibitem{CFM06}I. Knowles, Parameter identification for elliptic problems, \emph{J. Comput. Appl. Math.} \textbf{131(1-2)} (2001) 175-194.

\bibitem{CFM07}H. Banks and K. Kunisch, Estimation Techniques for Distributed Parameter Systems, \emph{Birkh\"{a}user Boston} (1989).

\bibitem{CFM08}G. Chavent, Nonlinear Least Squares for Inverse Problems: Theoretical Foundations and Step-by-step Guide for Applications, \emph{Scientific Computation} (2010).

\bibitem{CFM09}G. Chavent and K. Kunisch, The output least squares identifiability of the diffusion coefficient from an $H^1$-observation in a 2d elliptic equation, \emph{ESIAM: Contr. Optim. Ca.} \textbf{8} (2002) 423-440.

\bibitem{CFM10}F. Colonius and K. Kunisch, Output least squares stability in elliptic systems, \emph{Appl. Math. Opt.} \textbf{19} (1989) 33-63.

\bibitem{CFM11}K. Ito and K. Kunisch, Lagrange multiplier approach to variational problems and application, \emph{SIAM. Philadelphia} \textbf{15} (2008).

\bibitem{CFM12}R.Kohn and B. Lowe, A variational method for parameter identification, \emph{RAIRO Mod\'{e}l. Math. Anal. Numer.} \textbf{22} (1988) 119-158.

\bibitem{CFM13}G. Richter, An inverse problem for the steady state diffusion equation, \emph{SIAM. J. Appl. Math.} \textbf{41} (1981) 210-221.

\bibitem{CFM14}N. Sun, Inverse Problems in Groundwater Modeling, \emph{Kluwer Academic} (1994).

\bibitem{CFM15}W. Yeh, Review of parameter identification procedures in ground water hydrology: the inverse problem, \emph{Water. Resour. Res.} \textbf{22(2)} (1986) 95-108.

\bibitem{CFM16}N. Dinh and N. Tran, Convergence rates for total variation regularization of coefficient identification problems in elliptic equations, \emph{Inverse. Probl.} \textbf{27(7)} (2011) 593-616.

\bibitem{CFM17}H. Engl, K. Kunisch and A. Neubauer, Convergence rates for Tikhonov regularization of non-linear ill-posed problems, \emph{Inverse. Probl.} \textbf{5} (1989) 523-540.

\bibitem{CFM18}M. Hanke, A regularizing levenberg-marquardt scheme with applications to inverse groundwater filtration problem, \emph{Inverse. Probl.} \textbf{13(1)} (1997) 79-95.

\bibitem{CFM19}N. Dinh and N. Tran, Convergence rates for Tikhonov regularization of coefficient identification problems in Laplace-type equations, \emph{Inverse. Probl.} \textbf{26(12)} (2010) 1-22.

\bibitem{CFM20}Q. Wang and Q. He, Convergence rate of regularization solution for coefficient identification problem in elliptic equations, \emph{Journal of Luoyang Institute of Science and Technology (Natural Science Edition)} \textbf{31(2)} (2021) 74-82.

\end{thebibliography}
\end{document}